\newtheorem{theorem}{Theorem} % If subsection, get 3.1.2, etc.
\newtheorem{alphtheorem}{Theorem}
\newtheorem{alphlemma}{Lemma}
\newtheorem{corollary}{Corollary}
\newtheorem{lemma}{Lemma}
\theoremstyle{definition}
\theoremstyle{remark}
\def\Z{\mathbb{Z}}
\def\T{\mathsf{T}}
\def\zero{\boldsymbol{0}}
\def\ds{\displaystyle}
\def\KG{\operatorname{KG}}
\def\SG{\operatorname{SG}}
\def\cd{\operatorname{cd}}
\def\alt{\operatorname{alt}}
\def\salt{\operatorname{salt}}
\def\HH{\mathcal{H}}
\def\T{\mathcal{T}}
\def\mod{\operatorname{mod}}
\title{Chromatic Number of Random Kneser Hypergraphs}
\author{Meysam Alishahi}
\address{M. Alishahi, 
School of Mathematical Sciences,
Shahrood University of Technology, Shahrood, Iran}
\email{meysam\_alishahi@shahroodut.ac.ir}
\author{Hossein Hajiabolhassan}
\address{H. Hajiabolhassan,
Department of Mathematical Sciences,
Shahid Beheshti University, P.O. Box 19839-69411, Tehran, Iran \newline
School of Mathematics, Institute for Research in Fundamental Sciences (IPM), P.O. Box 19395-5746, Tehran, Iran}
\email{hhaji@sbu.ac.ir}
\begin{document}
\maketitle

\begin{abstract} 
Recently, Kupavskii~[{\it On random subgraphs of {K}neser and {S}chrijver 
graphs.  J. Combin. Theory Ser. A, {\rm 2016}.}] investigated the chromatic 
number of random Kneser graphs 
$\KG_{n,k}(\rho)$ and proved that, in many cases, 
the chromatic numbers of  the random Kneser graph $\KG_{n,k}(\rho)$ 
and  the Kneser graph $\KG_{n,k}$ are almost surely closed. He also marked the studying of the chromatic 
number of random Kneser hypergraphs $\KG^r_{n,k}(\rho)$ as a very interesting problem. 
With the help of $\Z_p$-Tucker lemma, a combinatorial generalization of the Borsuk-Ulam theorem, we generalize Kupavskii's result to random general  Kneser hypergraphs by   
introducing an almost surely lower bound for the chromatic number of them. 
Roughly speaking, as a special case of our result,  we show that the chromatic numbers of  the random Kneser hypergraph $\KG^r_{n,k}(\rho)$ and the Kneser hypergraph $\KG^r_{n,k}$ are almost surely closed in many cases.  Moreover,  restricting to the Kneser and {S}chrijver graphs, we present a purely combinatorial proof for  an improvement of Kupavskii's results. 

Also, for any hypergraph $\HH$, we present a lower bound for  
the minimum number of colors required in 
a coloring of $\KG^r(\mathcal{H})$ with no monochromatic  $K_{t,\ldots,t}^r$ subhypergraph, 
where $K_{t,\ldots,t}^r$ is the complete $r$-uniform $r$-partite hypergraph with $t r$ 
vertices such that each of its parts has $t$ vertices. 
This result generalizes the lower bound for the chromatic number of 
$\KG^r(\mathcal{H})$ found by the present authors~[{\it On the chromatic number of general 
{K}neser hypergraphs.  J.  Combin. Theory, Ser. B, 
  {\rm 2015}.}]. \\

\noindent{\bf Keywords:} random Kneser hypergraphs, chromatic number of hypergraphs, $\mathbb{Z}_p$-Tucker lemma
\end{abstract}
\section{Introduction and Main Results}
For positive integers $n$ and $k$, by the symbols $[n]$ and ${[n]\choose k}$, we mean 
the set $\{1,\ldots,n\}$ and the set of all $k$-subsets of $[n]$, respectively.  
A hypergraph $\mathcal{H}$ is a pair $(V(\mathcal{H}),E(\mathcal{H}))$, where 
$V(\mathcal{H})$ is a finite nonempty set and $E(\mathcal{H})$ is a family of distinct
nonempty subsets of $V(\mathcal{H})$. Respectively, the sets $V(\mathcal{H})$ and $E(\mathcal{H})$ are called the vertex set and the edge set of $\mathcal{H}$.
If each  edge of $\mathcal{H}$ has the cardinality $r$, then $\mathcal{H}$ is called $r$-uniform. A $2$-uniform hypergraph is simply called a graph. 
Let $\mathcal{H}$ be  an $r$-uniform hypergraph and $V_1,\ldots,V_r$ be  pairwise disjoint subsets of $V(\mathcal{H})$. 
The hypergraph $\mathcal{H}[V_1,\ldots,V_r]$ is a subhypergraph of $\mathcal{H}$ 
whose vertex set and edge set are respectively   
$\ds\bigcup_{i=1}^rV_i$ and $$E(\mathcal{H}[U_1,\ldots, U_r])=\left\{e\in E(\mathcal{H}):\; e\subseteq \ds\bigcup_{i=1}^r U_i\mbox{ and } |e\cap U_i|= 1\mbox{ for each } i\in[r]\right\}.$$

For a positive integer $r\geq 2$,  the Kneser hypergraph $\KG^r_{n,k}$ is a 
hypergraph which has the vertex set ${[n]\choose k}$, and whose edges are   
formed by the $r$-sets $\{e_1,\ldots,e_r\}$, where $e_1,\ldots,e_r$ are pairwise disjoint members of ${[n]\choose k}$. 
Kneser 1955~\cite{MR0068536} conjectured that  for $n\geq 2k$, the chromatic number of $\KG^2_{n,k}$ is
$n-2k+2$. After more than 20 years, in a  fascinating paper, 
Lov{\'a}sz~\cite{MR514625}  gave an affirmative answer to Kneser's conjecture 
using algebraic topology. Lov{\'a}sz's paper is known as the beginning of the 
study of combinatorial problems by using topological tools, which is called topological combinatorics. Later, in~1986, Alon, Frankl and 
Lov\'asz~\cite{MR857448} generalized Lov{\'a}sz's result to Kneser hypergraphs by proving that for $n\geq rk$, 
$$\chi(\KG^r_{n,k})=\left\lceil{n-r(k-1)\over r-1}\right\rceil.$$ 
This result also gives a positive answer to a conjecture posed by Erd\H{o}s~\cite{MR0465878}. 
Schrijver~\cite{MR512648} improved Lov{\'a}sz's result 
by introducing a subgraph $\SG_{n,k}$ of $\KG^2_{n,k}$, called the Schrijver graph,  
which is a vertex critical graph having the same chromatic number as that of $\KG^2_{n,k}$.
A {\it stable subset of $[n]$} is a set $A\subseteq [n]$ such that for 
each $i\neq j\in A$, we have
$2\leq |i-j|\leq n-2$.
Let ${[n]\choose k}_{stable}$ be the set of all stable $k$-subsets of $[n]$.  
The graph $\SG_{n,k}=\KG\left([n],{[n]\choose k}_{stable}\right)$ is called the Schrijver graph.

For a hypergraph $\mathcal{H}$ and a positive integer $r\geq 2$, the general Kneser 
hypergraph $\KG^r(\mathcal{H})$ is an $r$-uniform hypergraph with vertex 
set $E(\mathcal{H})$ 
and the edge set defining as follows;
$$E(\KG^r(\mathcal{H}))=\left\{\{e_1,\ldots,e_r\}\subseteq E(\mathcal{H}):\; 
e_i\cap e_j=\varnothing\mbox{ for each } i\neq j\in[r]\right\}.$$ 
Throughout the paper, for $r=2$, we speak about $\KG(\mathcal{H})$ and $\KG_{n,k}$
rather than $\KG^2(\mathcal{H})$ and $\KG^2_{n,k}$, respectively. The $r$-colorability defect of ${\mathcal H}$, denoted ${\rm cd}_r({\mathcal H})$, is the 
minimum number of vertices should be excluded so that the induced subhypergraph on the remaining vertices is $r$-colorable.
Note that if we set $K_n^k=([n],{[n]\choose k})$, then $\KG^r(K_n^k)=\KG^r_{n,k}$ 
and $\cd_r(K_n^k)=n-r(k-1)$ for $n\geq rk$. 
Dol'nikov~\cite{MR953021}~(for $r=2$) and K{\v{r}}{\'{\i}}{\v{z}}~{\rm \cite{MR1081939} }improved  the results by Lov\'asz~\cite{MR514625} and  Alon, Frankl and 
Lov\'asz~\cite{MR857448} by proving 
$\chi(\KG^r(\mathcal{H}))\geq\left\lceil{\cd_r(\mathcal{H})\over r-1}\right\rceil.$ 
A famous combinatorial counterpart of the Borsuk-Ulam theorem is Tucker lemma~\cite{MR0020254}.
Matou{\v{s}}ek~\cite{MR2057690} proved Lov\'asz's theorem by use of Tucker lemma.
He also presented a purely combinatorial proof for 
Tucker lemma, hence a purely combinatorial proof 
for Lov\'asz's theorem. 
Ziegler~\cite{MR1893009} extended Tucker lemma to $\Z_p$-Tucker lemma with a proof which makes no use of topological tools.
Using this lemma, Ziegler~\cite{MR1893009}, inspired by Matou{\v{s}}ek's proof,  improved 
Dol'nikov-K{\v{r}}{\'{\i}}{\v{z}} lower bound by a purely combinatorial approach. 
Next, Meunier~\cite{MR2793613} found a variant of  $\Z_p$-Tucker lemma as an 
extension of Ziegler's result, which can be proved combinatorially as well. Using this lemma, he presented a combinatorial proof of Schrijver's result.\\

\noindent{\bf Remark.} Note that since there is a purely combinatorial proof for $\Z_p$-Tucker 
lemma (Lemma~\ref{zptucker}), see~\cite{MR2793613,MR1893009}, 
any combinatorial proof with the help 
of $\Z_p$-Tucker lemma can be seen as a purely combinatorial proof. 
In this point of view, all results in this paper are proved purely combinatorial. \\

Let $\mathbb{Z}_r=\{\omega^1,\ldots,\omega^r\}$ be a cyclic group with generator $\omega$.
For an $X=(x_1,\ldots,x_n)\in(\mathbb{Z}_r\cup\{0\})^n$, an {\it alternating 
subsequence of $X$} is a sequence $x_{i_1},x_{i_2},\ldots,x_{i_k}$ ($i_1<\cdots <i_k$) of nonzero terms of $X$ such that $x_{i_j}\neq x_{i_{j+1}}$ for each $j\in[k-1]$.
The maximum length of an alternating subsequence of $X$ is called 
{\it the alternation number of $X$}, denoted ${\rm alt}(X)$. We define $\alt(0,\ldots,0)=0$.
For each $i\in[r]$, let $X^i$ be the set of all $j\in[n]$
such that $x_j=\omega^i$, that is, $X^i=\{j\in[n]\;:\; x_j=\omega^i\}$. 
Note that, by abuse of notation, we can write $X=(X^1,\ldots,X^r)$.
For two signed vectors $X$ and $Y$, by $X\subseteq Y$, we mean
$X^i\subseteq Y^i$ for each $i\in [r]$.
Let $\mathcal{H}$ be a hypergraph and let 
$\sigma: [n]\longrightarrow V(\mathcal{H})$ be a bijection.
Define 
$${\rm alt}_r(\mathcal{H},\sigma,q)=\max\left\{{\rm alt}(X):\; X\in(\Z_r\cup\{0\})^n\;\mbox{ s.t. } |E(\mathcal{H}[\sigma(X^i)])|\leq q-1\mbox{ for all } i\in[r]\;\right\}.$$
Now, set 
$${\rm alt}_r(\mathcal{H},q)=\min_\sigma {\rm alt}_r(\mathcal{H},\sigma,q),$$
where the minimum is taken over all bijections $\sigma:[n]\longrightarrow V(\mathcal{H}).$
Throughout the paper, for $q=1$, 
we would use ${\rm alt}_r(\mathcal{H})$ rather than ${\rm alt}_r(\mathcal{H},1)$.
The present authors~\cite{2013arXiv1302.5394A}, using the extension of 
$\Z_p$-Tucker lemma by 
Meunier~\cite{MR2793613}, improved Dol'nikov-K{\v{r}}{\'{\i}}{\v{z}} lower 
bound by proving that
\begin{equation}\label{alihajijctb} 
\chi(\KG^r(\mathcal{H}))\geq
\left\lceil{|V(\mathcal{H})|-\alt_r(\mathcal{H})\over r-1}\right\rceil.
\end{equation}
Using this lower bound, the chromatic numbers of several families 
of graphs  and hypergraphs are computed, see~\cite{2013arXiv1302.5394A,2014arXiv1401.0138A,2014arXiv1403.4404A,2014arXiv1407.8035A,2015arXiv150708456A,HaMe16}.\\

\subsection{\bf Random Kneser Hypergraphs.}
Let $\rho$ be a real number, where $0<\rho\leq 1$. The random general Kneser hypergraph 
$\KG^r(\mathcal{H})(\rho)$ is a random spanning subgraph of $\KG^r(\mathcal{H})$ containing each edge 
of $\KG^r(\mathcal{H})$ randomly and independently with probability $\rho$, i.e., each pairwise vertex-disjoint edges $e_1,\ldots,e_r\in E(\mathcal{H})$ 
form an edge of $\KG^r(\mathcal{H})(\rho)$ with probability $\rho$. 
The stability properties of random Kneser graphs $\KG_{n,k}(\rho)$  
has been received a considerable 
attention in recent years, see for instances~\cite{MR3482268,MR3403515}. 
In this regard, Bollob{\'a}s, Narayanan,  and Raigorodskii~\cite{MR3403515}
proved a random analogue of the Erd\H{o}s-Ko-Rado theorem. In detail, 
they proved that 
for a real number $\varepsilon>0$ and an integer function $2\leq k=k(n)=o(n^{1\over 2})$, 
there is a threshold $t(n)\in (0,1]$ such that for $\rho\geq (1+\varepsilon)t(n)$
and $\rho\leq (1-\varepsilon)t(n)$, the quantity $\ds{\rm Pr}\left(\alpha\left(\KG_{n,k}(\rho)\right)={n-1\choose k-1}\right)$ respectively tends to $1$ and $0$ as $n$ goes to infinity. 
They also asked what happens for larger $k$. Furthermore, they conjectured that if
${k\over n}$ is bounded away from ${1\over 2}$, then such a random analogue of the Erd\H{o}s-Ko-Rado theorem should continue to hold for some $\rho$ bounded away from $1$.
This conjecture received an affirmative answer owing to the work by 
Balogh, Bollob{\'a}s, and Narayanan~\cite{MR3482268}. They proved that the 
random analogue of the Erd\H{o}s-Ko-Rado theorem is still true for 
each $k\leq ({1\over 2}-\varepsilon)n$.

In the rest of the paper, for simplicity of notation, for two functions $f(n)$ and $g(n)$, 
by $f(n)\gg g(n)$ or $g(n)\ll f(n)$, we mean $\ds\lim_{n\mapsto\infty} {g(n)\over f(n)}=0$.
Also, the abbreviation a.s. stands for ``almost surely'', which means that the probability tends to $1$ as $n$ goes to infinity.

Recently, Kupavskii~\cite{MR3479235} studied 
the chromatic number of random Kneser graphs $\KG_{n,k}(\rho)$. 
%He used Gale's lemma~\cite{MR0085552} , similar to B{\'a}r{\'a}ny's proof~\cite{MR514626} of Lov\'asz's theorem, to introduce a lower bound for the chromatic number of random Kneser graphs.
He applied Gale's lemma~\cite{MR0085552},  in a similar fashion as in B{\'a}r{\'a}ny's proof~\cite{MR514626} of Lov\'asz's theorem, 
to introduce an a.s. lower bound for the chromatic number of random Kneser graphs $\KG_{n,k}(\rho)$. 
The following theorem is the main result of Kupavskii's paper.
\begin{alphtheorem}{\rm\cite{MR3479235}}\label{Kupavskiimain}
Let $k=k(n)\geq 2$ and $l=l(n)\geq 1$ be integer functions and $\rho=\rho(n)\in(0,1]$ 
be a real function such that  $d=n-2k-2l+2\geq 3$. Put $x=\ds\left\lceil{{k+l\choose k}\over d-1}\right\rceil$. If for some $\epsilon>0$, we have $(1-\epsilon)\rho>x^{-2}n\ln 3+2x^{-1}(1+\ln(d-1))$, then 
a.s. $\chi(\SG_{n,k}(\rho))\geq d$.
\end{alphtheorem}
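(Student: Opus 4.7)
The plan is to strengthen the $\Z_2$-Tucker argument underlying~(\ref{alihajijctb}) so that, instead of producing merely one monochromatic edge, every hypothetical $(d-1)$-coloring of $V(\SG_{n,k})$ is forced to realize a whole bipartite family of $x^2$ monochromatic edges in $\SG_{n,k}$. A union bound then shows that a.s.\ $\SG_{n,k}(\rho)$ retains at least one edge from every such family, defeating every $(d-1)$-coloring.

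\textbf{Step 1 (structural lemma).} I will prove that for any $(d-1)$-coloring $c\colon \binom{[n]}{k}_{\rm stable}\to[d-1]$ there exist two disjoint stable sets $A,B\subseteq[n]$ with $|A|=|B|=k+l$ and a color $i\in[d-1]$ such that at least $x$ stable $k$-subsets of $A$ and at least $x$ stable $k$-subsets of $B$ are assigned color $i$. This mirrors Meunier's $\Z_2$-Tucker proof~\cite{MR2793613} of Schrijver's theorem, but is run on the enlarged window $|X^+|=|X^-|=k+l$: one builds a $\Z_2$-equivariant labeling $\lambda\colon\{-,0,+\}^n\setminus\{\mathbf{0}\}\to\{\pm 1,\ldots,\pm(d-1)\}$ in which $\lambda(X)$ records the pigeonhole-dominant color on the positive side (some color shared by at least $x$ of the $\binom{k+l}{k}$ stable $k$-subsets contained in $X^+$). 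Since $d-1<n$, $\Z_2$-Tucker produces a pair $X\subseteq Y$ with $\lambda(X)=-\lambda(Y)$; unpacking this pair furnishes the required $(A,B,i)$. All $x^2$ ordered pairs of the corresponding $k$-subsets are stable and disjoint (since $A,B$ are disjoint), hence are edges of $\SG_{n,k}$ all colored $i$.

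\textbf{Step 2 (union bound).} For each triple $(A,B,i)$ let $E_{A,B,i}$ be the event that there exist $\mathcal{A}\subseteq\binom{A}{k}_{\rm stable}$ and $\mathcal{B}\subseteq\binom{B}{k}_{\rm stable}$ with $|\mathcal{A}|=|\mathcal{B}|=x$ and no $\SG_{n,k}(\rho)$-edge between them. If $\chi(\SG_{n,k}(\rho))<d$ then $E_{A,B,i}$ occurs for the triple produced by Step~1. Parameterising $(A,B)$ by sign vectors $X\in\{-,0,+\}^n$ with $X^+=A$ and $X^-=B$ gives at most $3^n(d-1)$ triples; for each, $\Pr[E_{A,B,i}]\leq\binom{\binom{k+l}{k}}{x}^2(1-\rho)^{x^2}$. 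Using $x\geq\binom{k+l}{k}/(d-1)$ to bound $\binom{\binom{k+l}{k}}{x}\leq(e(d-1))^x$ and $(1-\rho)^{x^2}\leq e^{-\rho x^2}$, the union bound yields
\begin{equation*}
\Pr[\chi(\SG_{n,k}(\rho))<d]\leq\exp\bigl(n\ln 3+(2x+1)\ln(d-1)+2x-\rho x^2\bigr),
\end{equation*}
which, after dividing by $x^2$ and absorbing lower-order terms, tends to $0$ precisely when $(1-\epsilon)\rho>x^{-2}n\ln 3+2x^{-1}(1+\ln(d-1))$.

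\textbf{Main obstacle.} The delicate step is Step~1: inequality~(\ref{alihajijctb}) is too weak, as it furnishes only a single monochromatic edge, whereas the probabilistic bound demands an $x\times x$ biclique-shaped family. The obstruction is the \emph{synchronisation of the pigeonhole across hemispheres}: it is cheap to find a dominant color on each side, but if those two colors differ we gain nothing. Designing the $\Z_2$-equivariant labeling $\lambda$ so that $\Z_2$-Tucker's antipodal collapse necessarily forces matching dominant colors on both hemispheres is the technical heart of the argument, and the slack parameter $l$ is chosen precisely so that each hemisphere contains a stable $(k+l)$-subset, allowing pigeonhole to succeed simultaneously with the same color $i$ on both sides.
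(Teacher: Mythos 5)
Your overall architecture---a structural lemma forcing an $x\times x$ monochromatic complete bipartite configuration in $\SG_{n,k}$ under every $(d-1)$-coloring, followed by a union bound over the at most $3^n(d-1)$ triples $(A,B,i)$---is exactly the route the paper takes (Lemma~\ref{SGlemma} and Lemma~\ref{main:lemmaSG}, fed into the union bound of Theorem~\ref{sgtheorem}), and your Step~2 computation is correct. The genuine gap is Step~1: you assert the structural lemma but do not prove it, and you explicitly defer the one point where all of the difficulty sits, namely forcing the two hemispheres to agree on a dominant color. As written, the labeling you sketch (``record the pigeonhole-dominant color on the positive side'') is not shown to satisfy the hypotheses of the $\Z_2$-Tucker lemma, the quoted conclusion ``a pair $X\subseteq Y$ with $\lambda(X)=-\lambda(Y)$'' is not the form in which the lemma delivers its contradiction, and a raw Tucker argument would in any case produce hemispheres $X^+,X^-$ of uncontrolled size rather than the exact cardinality $k+l$ your union bound assumes. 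Until $\lambda$ is constructed and its properties verified, Step~1 is unproven.

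There are two standard ways to close this, both in the paper. The slick one (Lemma~\ref{SGlemma}) avoids re-running Tucker entirely: define an auxiliary coloring $f$ of $V(\SG_{n,k+l})$ by letting $f(L)$ be the most popular $c$-color among the $\binom{k+l}{k}$ stable $k$-subsets of the stable $(k+l)$-set $L$ (every $k$-subset of a stable set is stable, and pigeonhole gives that color multiplicity at least $x$). Since $\chi(\SG_{n,k+l})=n-2(k+l)+2=d>d-1$ by Schrijver's theorem, $f$ is not a proper coloring, and a monochromatic edge of $f$ is precisely your synchronized triple $(A,B,i)$ with $|A|=|B|=k+l$; the synchronization you worry about is exactly the statement that $f$ has a monochromatic edge. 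The bare-hands alternative (Lemma~\ref{main:lemmaSG}) defines $g(X^{\varepsilon})$ as the dominant color among the last $q$ edges inside $X^{\varepsilon}$ and sets $\lambda_2(X)=\salt(\HH,\sigma,q)+\max\bigl(g(X^+),g(X^-)\bigr)-1$; if the two dominant colors never coincide, the maximum is always at least $2$, so $\lambda_2$ takes only $C-1$ values above the threshold $\salt(\HH,\sigma,q)$ and the $\Z_2$-Tucker inequality forces $n\leq\salt(\HH,\sigma,q)+C-1$, contradicting $C<n-\salt(\HH,\sigma,q)+1$. Supplying either argument turns your outline into a complete proof.
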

Kupavskii, at the end of his paper, marked the investigation of the chromatic 
number of random Kneser hypergraphs $\KG^r_{n,k}(\rho)$ as a very interesting problem. 
In this paper, we shall study the chromatic number of random general Kneser hypergraphs  $\KG^r_{n,k}(\mathcal{H})(\rho)$.
As the first main result of this paper, we extend Theorem~\ref{Kupavskiimain} 
to the following theorem. 

\begin{theorem}\label{thm:main}
Let ${\mathcal A}=\left\{\mathcal{H}_m:\; m\in\mathbb{N}\right\}$ be a 
family of distinct hypergraphs and set 
$n=n(m)=|V(\mathcal{H}_m)|$.  Let $r=r(n)\geq 2$, $t=t(n)$, $d=d(n)$, 
and $q=q(n)$ be integer functions, where $(d-1)(t-1)+1\leq q\leq (d-1)t$ and 
let $\rho=\rho(n)\in(0,1]$ be a real function. Then we a.s. have $$\chi(\KG^r(\mathcal{H}_m)(\rho))\geq \min\left\{{|V(\mathcal{H}_m)|-{\rm alt}_r(\mathcal{H}_m,q)\over r-1},d\right\}$$ 
provided that  ${n\ln(r+1)+rt(1+\ln (d-1))}-\rho t^r\rightarrow -\infty$ as $n$ tends to infinity.
\end{theorem}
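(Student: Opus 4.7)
My plan is to combine the $\Z_r$-Tucker machinery used to prove inequality \eqref{alihajijctb} in \cite{2013arXiv1302.5394A} with a probabilistic union bound. Suppose for contradiction that $\Pr[\chi(\KG^r(\mathcal{H}_m)(\rho))<d]$ does not tend to zero, where we may assume $d\leq(|V(\mathcal{H}_m)|-\alt_r(\mathcal{H}_m,q))/(r-1)$ (otherwise replace $d$ by this quantity, which only weakens the hypothesis). Any proper $(d-1)$-coloring $c$ of the random subgraph is in particular a $(d-1)$-coloring of the vertex set $E(\mathcal{H}_m)$ of $\KG^r(\mathcal{H}_m)$ that uses fewer than $(|V(\mathcal{H}_m)|-\alt_r(\mathcal{H}_m,q))/(r-1)$ colors, so the deterministic $\Z_r$-Tucker argument behind \eqref{alihajijctb} applies and produces a rich monochromatic subconfiguration, most of whose edges the random deletion must have removed for $c$ to remain proper.

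Concretely, fix a bijection $\sigma$ realizing $\alt_r(\mathcal{H}_m,q)$, and given $c$, define a $\Z_r$-equivariant label $\lambda_c(X)\in\Z_r\times[d-1]$ on each nonzero $X\in(\Z_r\cup\{0\})^n$ with $\alt(X)>\alt_r(\mathcal{H}_m,q)$ as follows. By the very definition of $\alt_r(\mathcal{H}_m,q)$, some index $i$ satisfies $|E(\mathcal{H}_m[\sigma(X^i)])|\geq q$; choose $i=i(X)$ via a first-crossing stopping rule so that the corresponding edge count is at most $q$ at the moment of assignment. Since $q\geq(d-1)(t-1)+1$ and $c$ uses at most $d-1$ colors, some color $\gamma$ appears on at least $t$ edges of $\mathcal{H}_m[\sigma(X^{i(X)})]$; set $\lambda_c(X)=(\omega^{i(X)},\gamma)$. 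Applying the $\Z_r$-Tucker lemma exactly as in \cite{MR2793613,MR1893009,2013arXiv1302.5394A} then yields a chain $X_1\subseteq\cdots\subseteq X_r$ with $\lambda_c(X_j)=(\omega^j,\gamma^*)$ for a common color $\gamma^*$. Writing $U_j:=\sigma(X_j^j)$ produces $r$ pairwise disjoint subsets of $V(\mathcal{H}_m)$, each satisfying $|E(\mathcal{H}_m[U_j])|\leq q$ and containing a $t$-subset $F_j$ of color-$\gamma^*$ edges.

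The $t^r$ tuples $(e_1,\ldots,e_r)\in F_1\times\cdots\times F_r$ are pairwise vertex-disjoint (because the $U_j$ are disjoint), hence edges of $\KG^r(\mathcal{H}_m)$, and all monochromatic under $c$. For $c$ to survive as a proper coloring of $\KG^r(\mathcal{H}_m)(\rho)$, every one of these $t^r$ edges must have been deleted during sampling, an event of probability at most $(1-\rho)^{t^r}\leq e^{-\rho t^r}$. To remove the dependence on $c$, I union-bound over the structures $(U_1,\ldots,U_r,F_1,\ldots,F_r)$ that the Tucker step could output: the disjoint tuple $(U_1,\ldots,U_r)$ is encoded by a sign vector in $(\Z_r\cup\{0\})^n$, contributing at most $(r+1)^n$ options, and the bound $|E(\mathcal{H}_m[U_j])|\leq q\leq(d-1)t$ limits the total number of $F_j$-choices to $\binom{(d-1)t}{t}^r\leq(e(d-1))^{rt}$. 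The resulting estimate
\[
\Pr\!\left[\chi(\KG^r(\mathcal{H}_m)(\rho))<d\right]\;\leq\;(r+1)^n(e(d-1))^{rt}e^{-\rho t^r}\;=\;\exp\!\bigl(n\ln(r+1)+rt(1+\ln(d-1))-\rho t^r\bigr)
\]
tends to zero by the standing hypothesis.

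The main technical obstacle is setting up the labeling $\lambda_c$ with the first-crossing truncation so that $\Z_r$-equivariance is preserved while simultaneously forcing $|E(\mathcal{H}_m[U_j])|\leq q$ in the chain extracted by $\Z_r$-Tucker; without this upper bound the number of possible $F_j$'s cannot be controlled by $\binom{(d-1)t}{t}$ and the union bound blows up. This requires adapting the construction of \cite{2013arXiv1302.5394A,MR2793613,MR1893009} with care. Once it is in place, the pigeonhole step, the $t^r$-monochromatic-edge count, and the union bound are routine.
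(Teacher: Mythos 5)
Your overall architecture---a deterministic Tucker-type argument producing a monochromatic $K^r_{t,\ldots,t}$ configuration, followed by a union bound over the possible configurations---is the same as the paper's, and your final estimate $(r+1)^n(e(d-1))^{rt}e^{-\rho t^r}$ matches. But the step you yourself flag as ``the main technical obstacle'' is exactly where the proof lives, and your proposed fix does not work. You want the Tucker labeling to output parts $U_j=\sigma(X_j^{\pi(j)})$ with $|E(\mathcal{H}_m[U_j])|\leq q$, so that the number of candidate $t$-sets $F_j$ is at most $\binom{q}{t}$. A ``first-crossing'' truncation cannot deliver this: adding a single vertex to a part can create arbitrarily many induced edges at once, so at the first moment the count reaches $q$ it may already far exceed $q$; and in any case the set $X_j^{\pi(j)}$ extracted from the Tucker chain is the \emph{full} part, whose induced edge count is bounded below by $q$ but not above. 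The paper's device is different and is the essential missing idea: fix once and for all a total order $\preceq$ on subsets of $V(\mathcal{H}_m)$ refining size, and for every admissible tuple $P=(M_1,\ldots,M_r)$ let $U_i(P)$ be the \emph{last $q$ edges} of $E(\mathcal{H}_m[\sigma(M_i)])$ in this order. This $q$-element window is a deterministic function of $M_i$ alone (independent of the coloring), the labeling in Lemma~\ref{lem:main} is built so that the $t$ monochromatic edges land inside it, and the union bound $\sum_P\binom{q}{t}^r(1-\rho)^{t^r}$ then goes through with $\binom{q}{t}\leq(eq/t)^t\leq(e(d-1))^t$ using $q\leq(d-1)t$. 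Without some such canonical window determined by $P$, your count of $F_j$-choices is not controlled.

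There is a second gap: the $\Z_p$-Tucker lemma requires $p$ prime, while the theorem allows arbitrary $r=r(n)\geq 2$. The paper handles composite $r$ by a separate reduction (Lemma~\ref{reduction}), introducing the auxiliary hypergraph $\T_{\mathcal{H},C,r_2,\sigma}$ whose edges are the sets $M$ with $|M|-\alt_{r_2}(\mathcal{H}[M],\sigma_M,q)>(r_2-1)C$, and using the inequality $\alt_{r_1}(\T,\sigma,1)\leq r_1(r_2-1)C+\alt_{r_1r_2}(\mathcal{H},\sigma,q)$ of Lemma~\ref{altT} to recurse on the prime factors of $r$. Your proposal applies ``the $\Z_r$-Tucker lemma'' directly for general $r$, which is not available. (Your preliminary reduction ``replace $d$ by $(n-\alt_r(\mathcal{H}_m,q))/(r-1)$'' is also slightly off, since the hypothesis $q\leq(d-1)t$ used in the binomial estimate may fail for the smaller value; one should keep the original $d$ in the union bound, as the paper does.)
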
 
Note that if we set $n=m$, $\mathcal{H}_n=K_n^k$, 
then $\KG^r(\mathcal{H}_n)=\KG^r_{n,k}$.
Consequently, if we set  $r=2$, $d=n-2k-2l+2$, $q={k+l\choose k}$, and  
$t=\ds\left\lceil{{k+l\choose k}\over d-1}\right\rceil$,  then the previous theorem 
results in a slightly weaker version of Kupavskii's theorem (using Kneser graphs $\KG_{n,k}$ instead of Schrijver graphs $\SG_{n,k}$).  
Also, in general, 
for $n=m$, ${\mathcal A}=\left\{K_n^k:\; n\in\mathbb{N}\right\}$, $d= \left\lceil{n-r(k+l-1)\over r-1}\right\rceil\geq 2$, and $q={k+l\choose k}$, 
Theorem~\ref{thm:main} implies that if ${n\ln(r+1)+rt(1+\ln (d-1))}-\rho t^r\rightarrow -\infty,$
then a.s.  $\chi(\KG^r_{n,k}(\rho))\geq \min\left\{{n-{\rm alt}_r(K_n^k,q)\over r-1},d\right\}.$ 
On the other hand, for the identity bijection $I:[n]\longrightarrow [n]$, since $q=\ds{k+l\choose k}$, 
we have $$\alt(K_n^k,q)\leq {\rm alt}_r(K_n^k,I,q)= r(k+l-1).$$ Consequently, we a.s. have 
$\chi(\KG^r_{n,k}(\rho))\geq \min\left\{{n-r(k+1-1)\over r-1},d\right\}= {n-r(k+1-1)\over r-1}$ 
provided that
$${n\ln(r+1)+rt(1+\ln (d-1))}-\rho t^r\rightarrow -\infty.$$  
This observation proves the next theorem provided that condition~(I) holds.   Therefore, to prove the next theorem, it suffices to consider just the second condition, which is discussed in Section~\ref{proofs}. 
\begin{theorem}\label{main:thmkneser}
Let $k=k(n)$, $r=r(n)$ and $l=l(n)$ be nonnegative integer functions 
and let $\rho=\rho(n)$ be a real function, where $2\leq r \leq {n\over k}$ and $\rho\in(0,1]$.  For $d= \left\lceil{n-r(k+l-1)\over r-1}\right\rceil\geq 2$ 
and $t=\ds\left\lceil{{k+l\choose k}\over d-1}\right\rceil$, we have a.s.
$\chi(\KG^r_{n,k}(\rho))\geq d$ provided that at least one of the followings holds;
\begin{itemize}
\item[{\rm (I)}] ${n\ln(r+1)+rt(1+\ln (d-1))}-\rho t^r\rightarrow -\infty$
\item[{\rm (II)}] $r(k+l)(\ln n+1) +rt(1+\ln (d-1))-\rho t^r\rightarrow -\infty.$
\end{itemize}
\end{theorem}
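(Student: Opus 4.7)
Condition~(I) follows from Theorem~\ref{thm:main} applied with $\mathcal{H}_n = K_n^k$ and the identity bijection $I:[n]\to[n]$ (using $\alt_r(K_n^k, I, q) \leq r(k+l-1)$), as already noted in the paragraph preceding the theorem statement. I therefore focus on proving the result under condition~(II).

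The plan is to rerun the probabilistic argument behind Theorem~\ref{thm:main} while exploiting the structure of $K_n^k$ to tighten the union bound. In the proof of Theorem~\ref{thm:main}, a would-be proper $(d-1)$-coloring of $\KG^r(\mathcal{H})(\rho)$, combined with the $\Z_r$-Tucker lemma applied to a suitable labelling derived from the coloring, forces the existence of a sign vector $X\in(\Z_r\cup\{0\})^n$ with $\alt(X) > \alt_r(\mathcal{H},q)$; consequently some $|E(\mathcal{H}[\sigma(X^i)])| \geq q$, and since $q\geq (d-1)(t-1)+1$, pigeon-hole across the $d-1$ colors produces a color $c_\ast$ used on at least $t$ edges of $\mathcal{H}$ inside each $\sigma(X^i)$. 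The resulting $t^r$ potentially monochromatic edges of $\KG^r(\mathcal{H})$ must all be deleted in $\KG^r(\mathcal{H})(\rho)$, a bad event of probability at most $(1-\rho)^{t^r}\leq e^{-\rho t^r}$. A union bound over $(r+1)^n$ sign vectors, $d-1$ colors, and at most $\binom{q}{t}^r \leq (e(d-1))^{rt}$ choices of witnessing $t$-subsets of monochromatic edges yields the hypothesis of Theorem~\ref{thm:main}.

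When $\mathcal{H}=K_n^k$ and $\sigma=I$, the constraint $\binom{|X^i|}{k}\leq q-1 = \binom{k+l}{k}-1$ forces $|X^i|\leq k+l-1$, so the certificate extracted above depends only on the $r$-tuple $(A_1,\ldots,A_r)$ of pairwise disjoint subsets $A_i := X^i \subseteq [n]$ of size $\leq k+l-1$. The number of such $r$-tuples is at most $\binom{n}{k+l}^r \leq n^{r(k+l)} = \exp\!\bigl(r(k+l)\ln n\bigr)$. Redoing the union bound with this factor in place of $(r+1)^n = \exp(n\ln(r+1))$ gives
\[
\Pr\!\bigl(\chi(\KG^r_{n,k}(\rho)) \leq d-1\bigr) \;\leq\; \exp\!\bigl(r(k+l)(\ln n + 1) + rt(1+\ln(d-1)) - \rho t^r\bigr),
\]
which tends to $0$ by condition~(II). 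The main obstacle will be to verify rigorously that the $\Z_r$-Tucker certificate in the proof of Theorem~\ref{thm:main} really depends only on the reduced data $(A_1,\ldots,A_r)$ and not on further information hidden in the full sign vector $X$; once that reduction is in place, the passage from Theorem~\ref{thm:main} to Theorem~\ref{main:thmkneser} is a purely combinatorial counting improvement exploiting the structure of $K_n^k$.
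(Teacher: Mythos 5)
There is a genuine gap in your treatment of Condition~(II), and you have in fact flagged it yourself as ``the main obstacle'' without resolving it. The reduction of the union bound from $(r+1)^n$ sign vectors to $\binom{n}{k+l}^r$ tuples does not follow from what you wrote. First, the inequality is reversed: the sets carrying the monochromatic certificate are those $X^i$ with $|E(K_n^k[X^i])|=\binom{|X^i|}{k}\geq q$, i.e.\ $|X^i|\geq k+l$ (the sets with $\binom{|X^i|}{k}\leq q-1$ are the ones entering the definition of $\alt_r(K_n^k,I,q)$, not the certificate), and the number of $r$-tuples of pairwise disjoint sets of size \emph{at least} $k+l$ is not bounded by $\binom{n}{k+l}^r$. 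Second, even granting the correct direction, the bad event attached to $(M_1,\ldots,M_r)$ in the proof of Theorem~\ref{thm:main} is determined by the last $q$ edges of $E(K_n^k[M_i])$ under the total order $\preceq$, and for an arbitrary order refining size these $q$ largest $k$-subsets of $M_i$ need not all lie inside a single $(k+l)$-subset of $M_i$; so the distinct bad events are not a priori indexed by $(k+l)$-sets. (This could be repaired by fixing $\preceq$ to be colexicographic on $k$-sets, for which the final segment of length $\binom{k+l}{k}$ of $\binom{M_i}{k}$ is exactly $\binom{M_i'}{k}$ for the top $k+l$ elements $M_i'$ of $M_i$, but you neither choose such an order nor prove this.)

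The paper avoids the issue entirely by not reusing the $\Z_p$-Tucker certificate for this theorem. It proves a separate Lemma~\ref{lem:mainken}: given a $C$-coloring of $\binom{[n]}{k}$ with $C<d$, color each $(k+l)$-set by the most popular color among its $k$-subsets; since $C<d=\chi(\KG^r_{n,k+l})$ by the Alon--Frankl--Lov\'asz theorem, this coloring is improper, yielding $r$ pairwise disjoint $(k+l)$-sets $N_1,\ldots,N_r$ each containing, by pigeonhole on $q=\binom{k+l}{k}\geq(d-1)(t-1)+1$, at least $t$ $k$-subsets of one common color. The union bound is then legitimately over the at most $\binom{n}{k+l}^r$ such tuples, giving exactly the exponent in Condition~(II). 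Your handling of Condition~(I) matches the paper, but under Condition~(II) the missing combinatorial lemma is precisely the content the paper supplies, so the proposal as written is incomplete.
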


In Theorem~\ref{Kupavskiimain} and Theorem~\ref{main:thmkneser}, we deal with some quite 
complicated conditions which make this theorems difficult to use. 
%Note that for applying Theorem~\ref{Kupavskiimain} or Theorem~\ref{main:thmkneser}, we need to check some complicated  conditions. 
To get rid  of these difficulties, Kupavskii derived some corollaries from 
Theorem~\ref{Kupavskiimain} having  simpler conditions.  
In detail, he proved that a.s. $\chi(\SG_{n,k}(\rho))\geq \chi(\KG_{n,k})-4$ 
provided that $\rho$ is fixed and $k\gg n^{3\over 4}$. 
Also, for any fixed $\rho$ and for $n-2k\ll \sqrt{n}$, he improved this lower 
bound by proving that a.s.  $\chi(\SG_{n,k}(\rho))\geq \chi(\KG_{n,k})-2$.
With a straightforward computation and by use of Theorem~\ref{main:thmkneser}, one can extend Kupavskii's results to the Kneser hypergraphs $\KG^2_{n,k}$. 

In the rest of this section, we consider 
some special cases of Theorem~\ref{main:thmkneser}, which are easy to interpret.
In this regard, we prove two corollaries (Corollary~\ref{corII} and Corollary~\ref{SGcor}), which not only extend 
Kupavskii's results  to random Kneser hypergraphs, but also improve it 
(when we deal with the case $r=2$).
\begin{corollary}\label{2main:cor}
Let $\rho\in(0,1]$ be a real number. Also, let $k=k(n)$ and $r=r(n)$ be positive integer functions, where $2\leq r \leq {n\over k}$.
%Let $\rho\in(0,1]$ be a real number and $k=k(n)$ and $r=r(n)\geq 2$ be integer functions. 
If $k\gg n^{r\over 2r-1}(\ln n)^{1\over 2r-1}$, then a.s. 
$\chi(\KG^r_{n,k}(p))\geq \left\lceil{n-r(k+1)\over r-1}\right\rceil$. In particular, if $n^{\frac{r-1}{r}}\gg rn-r^2k $, then a.s. 
$\chi(\KG^r_{n,k}(p))\geq \left\lceil{n-rk\over r-1}\right\rceil$.
\end{corollary}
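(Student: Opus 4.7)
The plan is to deduce both parts of Corollary~\ref{2main:cor} directly from Theorem~\ref{main:thmkneser} by choosing the auxiliary parameter $l$ so that the quantity $d=\lceil(n-r(k+l-1))/(r-1)\rceil$ there agrees with the lower bound claimed in the corollary, and then verifying that the asymptotic hypothesis implies one of the two conditions (I), (II).

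For the first assertion I would take $l=2$, giving
$$d=\left\lceil\frac{n-r(k+1)}{r-1}\right\rceil\qquad\text{and}\qquad t=\left\lceil\frac{\binom{k+2}{2}}{d-1}\right\rceil.$$
Since $d-1\le n/(r-1)$, one has $t\ge(k+1)(k+2)(r-1)/(2n)$, and hence
$$\rho\,t^r\;\ge\;\rho\!\left(\frac{(k+1)(k+2)(r-1)}{2n}\right)^{\!r},$$
which, up to an $r$-dependent factor, is of order $\rho k^{2r}/n^r$. The leading positive term in condition~(II) is $r(k+2)(\ln n+1)$, of order $rk\ln n$, while $rt(1+\ln(d-1))$ is easily absorbed. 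Demanding $\rho k^{2r}/n^r\gg rk\ln n$ reduces to $k^{2r-1}\gg n^r\ln n$, i.e.\ precisely $k\gg n^{r/(2r-1)}(\ln n)^{1/(2r-1)}$, so condition~(II) holds and Theorem~\ref{main:thmkneser} yields the claimed bound.

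For the ``in particular'' statement I would take $l=1$, so that $d=\lceil(n-rk)/(r-1)\rceil$ and $t=\lceil(k+1)/(d-1)\rceil$. Writing $s:=n-rk\ge r-1$, the hypothesis $n^{(r-1)/r}\gg rs$ makes $d$ small, hence $t\ge(k+1)(r-1)/s$ is large. Since $s=o(n)$ forces $k\ge n/(2r)$ for all large $n$, one obtains $\rho t^r\ge \rho\,n^r/(2rs)^r$, and the hypothesis then gives $\rho t^r/n\to\infty$. This dominates both the term $n\ln(r+1)$ in condition~(I) and the lower-order term $rt(1+\ln(d-1))=O(t\ln n)$, confirming that condition~(I) holds.

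I expect the main technical nuisance to be tracking the $r$-dependent constants (since $r=r(n)$ may itself grow with $n$) and verifying that $rt\ln d$ is swamped by $\rho t^r$; however, in both parts there is a polynomial gap between $\rho t^r$ and the dominant linear/logarithmic competitor, so crude estimates suffice.
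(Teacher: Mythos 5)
Your proposal is correct and follows essentially the same route as the paper: both parts are deduced from Theorem~\ref{main:thmkneser} with $l=2$ (verifying Condition~(II)) and $l=1$ (verifying Condition~(I), via the same observation that $n-rk=o(n)$ forces $k\ge n/(2r)$ and hence $t\gg r\,n^{1/r}$). Your write-up is in fact somewhat more explicit than the paper's about the asymptotic bookkeeping, and your concern about $r$-dependent constants is resolved exactly as you anticipate, since factors such as $2^r r/(r-1)^r$ stay bounded.
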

\begin{proof}
To prove the assertion, it suffices to check that if at least one of two conditions in 
Theorem~\ref{main:thmkneser}
holds for $l=2$ and $l=1$, respectively.
Let us first deal with the case $l=2$. We prove this case via Condition~(II) of 
Theorem~\ref{main:thmkneser}. To this end, 
we need to show that for
$d= \left\lceil{n-r(k+1)\over r-1}\right\rceil$ and 
$t=\ds\left\lceil{(k+2)(k+1)\over 2(d-1)}\right\rceil$, we have 
$r(k+2)(\ln n+1) +rt(1+\ln (d-1))-\rho t^r\rightarrow -\infty$, which clearly holds, 
since
$r(k+2)(\ln n+1)=o(t^r)$ and $rt(1+\ln d)\leq rt(1+\ln n)=o(t^r)$.

For $l=1$, note that ${n^{\frac{r-1}{r}}\over r^2}\gg d= \left\lceil{n-rk\over r-1}\right\rceil$ and for large enough $n$, we have 
$k\geq {n\over 2r}$; consequently, 
$$t=\ds\left\lceil{k+1\over (d-1)}\right\rceil\gg
{{n\over r}\over {n^{\frac{r-1}{r}}\over r^2}}=r\cdot n^{1\over r}.
$$
Now, we clearly have $n\ln(r+1)=o(t^r)$ and $rt(1+\ln d)\leq rt(1+\ln n)=o(t^r)$. 
 Using Condition~(I) of Theorem~\ref{main:thmkneser}, we have the proof completed.
\end{proof}

%%%%%%%%%%%%%%%%%%%%%%%%%%%%%%%%%%%%%%%%%%%%%
%%%%%%%%%%%%%%%%%%%%%%%%%%%%%%%%%%%%%%%%%%%%%

Next corollary is an immediate consequence of Corollary~\ref{2main:cor}.  
\begin{corollary}\label{corII}
	Let $\rho\in(0,1]$ be a real number. Also, let $k=k(n)$ and $r=r(n)$ be positive integer functions, where $2\leq r \leq {n\over k}$. Then the following assertions hold.
	\begin{itemize}
		\item[{\rm I)}] If $k\gg n^{r\over 2r-1}(\ln n)^{1\over 2r-1}$, 
		         then a.s. 
		         $$\chi(\KG^r_{n,k}(\rho))\geq\left\{	
				\begin{array}{ll}
					\chi(\KG^r_{n,k})-4 & r=2\\
					\chi(\KG^r_{n,k})-3 & r>2.
				\end{array}\right.$$
			 In particular, if  $n\not\equiv k,k+1\; (\mod r-1)$, then 
			 a.s $\chi(\KG^r_{n,k}(\rho))\geq\chi(\KG^r_{n,k})-2$.
		\item[{\rm II)}]  If  
			 $n^{\frac{r-1}{r}}\gg rn-r^2k $,  then a.s  
			$\chi(\KG^r_{n,k}(\rho))\geq\chi(\KG^r_{n,k})-2$. In particular, if
			$n\not\equiv k\; (\mod r-1)$, then a.s.  $\chi(\KG^r_{n,k}(\rho))
			\geq\chi(\KG^r_{n,k})-1$.
	\end{itemize}
\end{corollary}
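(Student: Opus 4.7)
The plan is to combine Corollary~\ref{2main:cor} with the Alon--Frankl--Lov\'asz formula $\chi(\KG^r_{n,k}) = \lceil (n - r(k-1))/(r-1) \rceil$ and reduce each claim to elementary ceiling arithmetic modulo $r-1$. Parts (I) and (II) follow from the first and second assertions of Corollary~\ref{2main:cor} respectively, so the whole task reduces to bounding
\[
D_1 := \left\lceil \tfrac{n - r(k-1)}{r-1} \right\rceil - \left\lceil \tfrac{n - r(k+1)}{r-1} \right\rceil
\quad\text{and}\quad
D_2 := \left\lceil \tfrac{n - r(k-1)}{r-1} \right\rceil - \left\lceil \tfrac{n - rk}{r-1} \right\rceil
\]
and showing that they match the shifts $4, 3, 2, 1$ predicted in the statement.

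For $D_1$, I would write $n - r(k+1) = (r-1)q + s$ with $0 \leq s < r - 1$; the numerator of the left ceiling then equals $(r-1)(q+2) + (s+2)$, so one only needs to check whether the shift $s \mapsto s + 2$ overflows modulo $r-1$. A short case split yields $D_1 = 4$ when $r = 2$, $D_1 = 3$ when $r \geq 3$ and $s \in \{0, r-2\}$, and $D_1 = 2$ otherwise. Using $r \equiv 1 \pmod{r-1}$, the two bad residues $s = 0$ and $s = r-2$ translate respectively into $n \equiv k+1$ and $n \equiv k$ modulo $r-1$, which is exactly the congruence excluded in the sharpened clause of (I). An identical but simpler analysis with shift $r = (r-1) + 1$ yields $D_2 = 2$ iff $n \equiv k \pmod{r-1}$ and $D_2 = 1$ otherwise, which is precisely (II).

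There is no real obstacle here; everything reduces to a case analysis on the residue modulo $r-1$. The only point needing a touch of care is the overflow in the ceiling function, which I would make uniform by expressing every quantity of the form $\lceil ((r-1)q + s)/(r-1) \rceil$ as $q + \mathbf{1}[s \neq 0]$ and then simply reading off the few cases; the apparent asymmetry between $r = 2$, $r = 3$ and $r \geq 4$ is an artefact of how large $s+2$ can be relative to the modulus $r-1$, and poses no substantive difficulty.
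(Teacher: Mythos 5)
Your proposal is correct and follows exactly the route the paper intends: the paper gives no explicit proof, stating only that the corollary is ``an immediate consequence of Corollary~\ref{2main:cor},'' and your ceiling arithmetic modulo $r-1$ (comparing $\lceil (n-r(k-1))/(r-1)\rceil$ with the bounds $\lceil (n-r(k+1))/(r-1)\rceil$ and $\lceil (n-rk)/(r-1)\rceil$) is precisely the omitted computation, with the case split and the translation of the bad residues $s=0$, $s=r-2$ into $n\equiv k+1$, $n\equiv k \pmod{r-1}$ all checking out.
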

Note that Kupavskii's~result (Theorem~\ref{Kupavskiimain}) provides an a.s. lower 
bound for the chromatic number of random Schrijver graphs $\SG_{n,k}(\rho)$, 
while Theorem~\ref{main:thmkneser} and Corollary~\ref{corII} concern the chromatic 
number of random Kneser  hypergraphs $\KG^r_{n,k}(\rho)$. 
The next theorem can be seen as a complementary statement for 
Theorem~\ref{Kupavskiimain}. 

\begin{theorem}\label{Kupavskiimainnew}
Let $k=k(n)\geq 2$ and $l=l(n)\geq 0$ be integer functions and $\rho=\rho(n)\in(0,1]$ 
be a real function such that  $d=n-2k-2l+2\geq 2$. Put $t=\ds\left\lceil{{k+l\choose k}\over d-1}\right\rceil$. If $r(k+l)(\ln n+1) +rt(1+\ln (d-1))-\rho t^r\rightarrow -\infty$,  then 
a.s. $\chi(\SG_{n,k}(\rho))\geq d$.
\end{theorem}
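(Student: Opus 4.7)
The plan is to apply Theorem~\ref{thm:main} to the hypergraph $\mathcal{H}=\bigl([n],{[n]\choose k}_{stable}\bigr)$, whose Kneser graph is precisely $\SG_{n,k}$, and to show that under the present hypothesis the lower bound produced by that theorem equals $d$. For this I need to produce a bijection $\sigma$ and an admissible $q\in[(d-1)(t-1)+1,(d-1)t]$ satisfying $\alt_r(\mathcal{H},\sigma,q)\le n-(r-1)d$, and additionally to refine the probabilistic input of Theorem~\ref{thm:main} so that the $n\ln(r+1)$ term in its hypothesis is replaced by the sharper $r(k+l)(\ln n+1)$ appearing here.

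The combinatorial part follows Meunier's~\cite{MR2793613} combinatorial proof of Schrijver's theorem. Taking $\sigma$ to be the identity $[n]\to[n]$, I would show that any $X\in(\Z_r\cup\{0\})^n$ with $\alt(X)\ge r(k+l)-(r-1)$ forces one of the induced stable-subset families $\mathcal{H}[X^i]$ to contain at least $t=\lceil\binom{k+l}{k}/(d-1)\rceil$ pairwise disjoint edges, via a pigeonhole over $d-1$ buckets of stable $k$-subsets. Setting $q=t$ then yields the desired bound on $\alt_r(\mathcal{H},\mathrm{id},q)$, so that the first term in the minimum of Theorem~\ref{thm:main} is at least $d$.

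The probabilistic improvement is the main novelty. In the proof of Theorem~\ref{thm:main} one has to union-bound over all $(r+1)^n$ sign vectors $X$, which contributes the $n\ln(r+1)$ term. However, once $\sigma=\mathrm{id}$ is fixed, the stability condition allows one to collapse the set of relevant sign vectors to those determined by the location and sign pattern of their first $r(k+l)$ alternating positions, a family of cardinality at most $\binom{n}{r(k+l)}\cdot r^{r(k+l)}\le e^{r(k+l)(\ln n+1)}$. Multiplying by the bucket factor $e^{rt(1+\ln(d-1))}$ coming from the choice of buckets inside each $X^i$ and by the joint-edge-absence probability $(1-\rho)^{t^r}\le e^{-\rho t^r}$, the total bad-event probability is bounded by $\exp\bigl(r(k+l)(\ln n+1)+rt(1+\ln(d-1))-\rho t^r\bigr)$, which is $o(1)$ by hypothesis.

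The main obstacle is rigorously justifying the truncation to the first $r(k+l)$ alternating positions: one must argue that any obstruction to $\chi(\SG_{n,k}(\rho))\ge d$ arising from some $X$ is already captured by the truncated vector $X'$, and that the $t^r$-tuple of pairwise disjoint potential edges extracted from $X$ via the $\Z_r$-Tucker mechanism depends only on $X'$. Carrying out this reduction without breaking the disjointness guarantee, which is what makes the probabilities multiply cleanly into $\rho t^r$, is the delicate step, and is where the stability definition of $\SG_{n,k}$ has to be used in a crucial way.
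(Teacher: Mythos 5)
Your proposal does not follow the paper's route, and as written it has two genuine gaps. The paper's proof of Theorem~\ref{Kupavskiimainnew} bypasses Theorem~\ref{thm:main} and the $\Z_p$-Tucker machinery entirely: it sets $r=2$, replaces Lemma~\ref{lem:mainken} by Lemma~\ref{SGlemma} (which rests only on the known value $\chi(\SG_{n,k+l})=n-2(k+l)+2=d$ and a most-popular-colour argument), and then repeats the union bound from the proof of Theorem~\ref{main:thmkneser} over pairs of disjoint \emph{stable} $(k+l)$-sets. The term $2(k+l)(\ln n+1)$ comes simply from the count $\binom{n}{k+l}^2\le (ne/(k+l))^{2(k+l)}$ of such pairs; no truncation of sign vectors is involved.

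The first gap is in your combinatorial reduction. Applying Theorem~\ref{thm:main} to $\mathcal{H}=\bigl([n],\binom{[n]}{k}_{stable}\bigr)$ cannot produce the bound $d$: as the paper itself points out at the start of Section~\ref{SG}, the $\alt$-based invariant loses one for Schrijver graphs. Concretely, the vector $X$ with alternating nonzero entries at positions $1,\dots,2(k+l)-2$ together with one more entry at position $n$ has $\alt(X)=2(k+l)-1=n-d+1$, yet neither $X^+$ nor $X^-$ contains $\binom{k+l}{k}$ stable $k$-subsets, because the sign class containing both $1$ and $n$ violates the cyclic condition $|i-j|\le n-2$ and so misses $\binom{k+l-2}{k-2}$ of its $k$-subsets. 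Hence $\alt_2(\mathcal{H},\sigma,q)\ge n-d+1$ and the first term of the minimum in Theorem~\ref{thm:main} is at most $d-1$; you would need the $\salt$ variant of Section~\ref{SG} (Lemma~\ref{main:lemmaSG}) or the direct Lemma~\ref{SGlemma} to recover $d$. In addition, your choice $q=t$ violates the standing hypothesis $(d-1)(t-1)+1\le q\le (d-1)t$ of Theorem~\ref{thm:main} whenever $d\ge 3$ and $t\ge 2$; the correct choice is $q=\binom{k+l}{k}$. The second gap is the probabilistic ``truncation'': the bad event $\mathbf{A}(P)$ in the proof of Theorem~\ref{thm:main} is defined via the last $q$ edges of $E(\mathcal{H}[\sigma(M_i)])$ in the total order $\preceq$, and these depend on the whole sets $M_i$, not on the first $r(k+l)$ alternating positions of a sign vector, so the family of events does not collapse as you claim; you yourself flag this as the unresolved delicate step. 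The fix is to abandon the sign-vector indexing altogether and index the bad events by pairs of disjoint stable $(k+l)$-sets, which is exactly what Lemma~\ref{SGlemma} makes possible.
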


Similar to the proof of Corollary~\ref{2main:cor} and by using 
Theorem~\ref{Kupavskiimainnew} instead of 
Theorem~\ref{main:thmkneser}, we can prove the next corollary, which is an improvement of Kupavskii's result. 
\begin{corollary}\label{SGcor}
Let $\rho\in(0,1]$ be a real number and $k=k(n)\leq {n\over 2}$ be an integer function. 
If $k\gg n^{2\over 3}(\ln n)^{1\over 3}$, then a.s. 
$\chi(\SG_{n,k}(\rho))\geq \chi(\KG_{n,k})-4$. 
\end{corollary}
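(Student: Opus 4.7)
The plan is to apply Theorem~\ref{Kupavskiimainnew} with $l=2$, closely following the template of the proof of Corollary~\ref{2main:cor} (but with Theorem~\ref{Kupavskiimainnew} in place of Theorem~\ref{main:thmkneser}). Since $\chi(\KG_{n,k})=n-2k+2$ by the Lov\'asz--Kneser theorem, this choice gives
\[
d \;=\; n-2k-2 \;=\; \chi(\KG_{n,k})-4,
\]
which is precisely the asserted lower bound. Taking $r=2$ (as $\SG_{n,k}$ is a graph), the work reduces to verifying
\[
2(k+2)(\ln n + 1) \;+\; 2t\bigl(1 + \ln(d-1)\bigr) \;-\; \rho\, t^{2} \;\longrightarrow\; -\infty,
\]
where $t = \lceil (k+2)(k+1)/(2(d-1))\rceil$.

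The key quantitative inputs are the crude bounds $d \le n$ and
\[
t \;\ge\; \frac{(k+2)(k+1)}{2(d-1)} \;\ge\; \frac{k^{2}}{2n},\qquad
t^{2} \;\ge\; \frac{k^{4}}{4 d^{2}}.
\]
Under the hypothesis $k \gg n^{2/3}(\ln n)^{1/3}$ one has $k^{3} \gg n^{2}\ln n \ge d^{2}\ln n$, hence
\[
\rho\, t^{2} \;\ge\; \frac{\rho\, k^{4}}{4 d^{2}} \;\gg\; k \ln n,
\]
which absorbs the first summand $2(k+2)(\ln n+1) = O(k\ln n)$. Similarly $k^{2} \gg n^{4/3}(\ln n)^{2/3} \gg n\ln n$, so $t \ge k^{2}/(2n) \gg \ln n \ge \ln(d-1)$; hence $\rho\, t^{2} = \rho\, t\cdot t \gg t \ln(d-1)$, absorbing the second summand. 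Both error terms are therefore of lower order than $\rho t^{2}$, and the required asymptotic holds. Theorem~\ref{Kupavskiimainnew} then gives almost surely $\chi(\SG_{n,k}(\rho)) \ge d = \chi(\KG_{n,k})-4$.

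The only real obstacle is bookkeeping: the threshold $k \gg n^{2/3}(\ln n)^{1/3}$ is calibrated precisely so that $k^{3}/d^{2} \gg \ln n$ in the worst case $d = \Theta(n)$, where the ratio $t^{2}/(k\ln n)$ is smallest. A weaker hypothesis on $k$ would fail to give $\rho t^{2} \gg k\ln n$. One also needs $d \ge 2$ for Theorem~\ref{Kupavskiimainnew} to apply, but this holds whenever $\chi(\KG_{n,k})-4 \ge 0$, so the conclusion is trivial (and still true) in the remaining edge cases near $k = n/2$.
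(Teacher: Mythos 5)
Your proposal is correct and follows exactly the route the paper intends: it invokes Theorem~\ref{Kupavskiimainnew} with $r=2$, $l=2$ (so $d=n-2k-2=\chi(\KG_{n,k})-4$) and verifies Condition (II) just as in the $l=2$ case of the proof of Corollary~\ref{2main:cor}, with the hypothesis $k\gg n^{2/3}(\ln n)^{1/3}$ being precisely what makes $2(k+2)(\ln n+1)=o(t^2)$ and $t\gg\ln n$. The only nit is the phrasing of the edge-case sentence ($d\ge 2$ requires $\chi(\KG_{n,k})-4\ge 2$, not $\ge 0$), but since the conclusion is trivial whenever $d\le 1$, this does not affect the argument.
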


\subsection{\bf Coloring With No Monochromatic $K_{t,\ldots,t}^r$ Subhypergraph.}
Let $r$ and $t$ be two integers, where $r\geq 2$ and $t\geq 1$ and let $\mathcal{H}$ be a hypergraph. Also, set $K_{t,\ldots,t}^r$ to be the complete $r$-uniform $r$-partite hypergraph with $t r$ vertices such that each of its parts has $t$ vertices.
Next result concerns the minimum number of colors required in 
a coloring of $\KG^r(\mathcal{H})$ with no monochromatic $K_{t,\ldots,t}^r$ subhypergraph. For $t=1$,  any edge of $\KG^r(\mathcal{H})$ is a $K_{1,\ldots,1}^r$ subhypergraph of $\KG^r(\mathcal{H})$. Therefore, for $t=1$, any coloring of $\KG^r(\mathcal{H})$ with no monochromatic $K_{1,\ldots,1}^r$ subhypergraph is just a proper coloring of $\KG^r(\mathcal{H})$. 
Note that for $t=q=1$, $d=n$,  the next theorem implies Inequality~\ref{alihajijctb}.
%Thus, next theorem generalizes  Alishahi and Hajiabolhasan's lower bound (Equation~\ref{alihajijctb}).
\begin{theorem}\label{genalihaji}
Let $\mathcal{H}$ be a hypergraph and $\sigma:[n]\longrightarrow V(\mathcal{H})$ be an arbitrary bijection. Also, 
let $d,q,r$ and $t$ be be positive integers, where $r\geq 2$ and  $q\geq (d-1)(t-1)+1$. Then
any coloring of $\KG^r(\mathcal{H})$ with no monochromatic $K^r_{t,\ldots,t}$ uses at least 
$\min\left\{\left\lceil{n-{\rm alt}_r(\mathcal{H},\sigma,q)\over r-1}\right\rceil,d\right\}$ colors.
\end{theorem}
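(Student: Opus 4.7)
The plan is to adapt the $\Z_p$-Tucker-lemma argument used by the present authors to establish~\eqref{alihajijctb} in~\cite{2013arXiv1302.5394A}, inserting a pigeonhole step that upgrades a single monochromatic hyperedge to a monochromatic copy of $K^r_{t,\ldots,t}$. For clarity I describe the proof when $r$ is prime; the composite case is reduced to this one by choosing a prime $p\ge r$ and adapting the labeling, in the same manner already used for~\eqref{alihajijctb}. Suppose toward a contradiction that $c:E(\mathcal{H})\to[C]$ is a coloring of $\KG^r(\mathcal{H})$ with no monochromatic $K^r_{t,\ldots,t}$ subhypergraph and that $C<\min\bigl\{\lceil(n-\alt_r(\mathcal{H},\sigma,q))/(r-1)\rceil,\,d\bigr\}$. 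Then $C\le d-1$, so the hypothesis $q\ge (d-1)(t-1)+1$ yields $q\ge C(t-1)+1$.

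Define a $\Z_r$-equivariant labeling
\[
\lambda:(\Z_r\cup\{0\})^n\setminus\{\zero\}\longrightarrow \Z_r\times\bigl[\alt_r(\mathcal{H},\sigma,q)+C\bigr]
\]
by two cases. For $X=(X^1,\ldots,X^r)\ne\zero$: if $\alt(X)\le\alt_r(\mathcal{H},\sigma,q)$ (\emph{Case A}), take $s\in\Z_r$ to be the sign of the last term of a longest alternating subsequence of $X$ and set $\lambda(X)=(s,\alt(X))$. Otherwise (\emph{Case B}), by the definition of $\alt_r$ some $i\in[r]$ satisfies $|E(\mathcal{H}[\sigma(X^i)])|\ge q$, and because $q\ge C(t-1)+1$ pigeonhole produces a color $c_0\in[C]$ realized by at least $t$ edges inside $\sigma(X^i)$; after an equivariant choice of such a pair $(i_X,c_X)$, set $\lambda(X)=(\omega^{i_X},\,\alt_r(\mathcal{H},\sigma,q)+c_X)$.

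The crucial point is the chain condition of Meunier's variant of the $\Z_p$-Tucker lemma~\cite{MR2793613}: for every strictly nested $X_1\subsetneq\cdots\subsetneq X_r$ whose members all receive a Case~B label with common second coordinate $\alt_r(\mathcal{H},\sigma,q)+c_0$, the first coordinates $\omega^{i_{X_j}}$ cannot be pairwise distinct. If they were, then $\{i_{X_1},\ldots,i_{X_r}\}=[r]$, and since $X_j\subseteq X_r$ forces $\sigma(X_j^{i_{X_j}})\subseteq\sigma(X_r^{i_{X_j}})$ while the sets $\sigma(X_r^1),\ldots,\sigma(X_r^r)$ are pairwise disjoint, the sets $\sigma(X_1^{i_{X_1}}),\ldots,\sigma(X_r^{i_{X_r}})$ would be pairwise disjoint and each would contain at least $t$ edges of color $c_0$; selecting $t$ such edges from each yields a copy of $K^r_{t,\ldots,t}$ in $\KG^r(\mathcal{H})$ monochromatic in color $c_0$, contradicting the hypothesis on $c$. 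The easy-label condition for Case~A is automatic from the alternation convention. The lemma then forces $n\le\alt_r(\mathcal{H},\sigma,q)+(r-1)C$, contradicting the assumed upper bound on $C$.

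The main obstacle is Case~B: the pair $(i_X,c_X)$ must be chosen genuinely $\Z_r$-equivariantly so that $\lambda$ meets the hypotheses of the $\Z_p$-Tucker lemma, and strict nesting in the chain condition must be exploited — together with the new quantitative input $q\ge(d-1)(t-1)+1$ — to convert ``pairwise distinct indices $i_{X_j}$'' into ``$r$ pairwise disjoint parts, each carrying $t$ edges of a single color,'' thereby producing $K^r_{t,\ldots,t}$ rather than merely one monochromatic hyperedge. Apart from this, the reduction from composite $r$ to prime $r$ is a mild secondary issue handled as in the proof of~\eqref{alihajijctb}.
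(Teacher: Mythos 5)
For prime $r$ your argument is essentially the paper's: the paper isolates exactly this $\Z_p$-Tucker construction as Lemma~\ref{lem:main} (same $\alpha=\alt_r(\mathcal{H},\sigma,q)$ and $m=\alpha+C$, same pigeonhole giving at least $\lceil q/C\rceil\geq t$ edges of one color inside the selected part, same reading of the failed chain condition of Lemma~\ref{zptucker} as $r$ pairwise disjoint parts each carrying $t$ edges of a common color, hence a monochromatic $K^r_{t,\ldots,t}$), and Theorem~\ref{genalihaji} is then a one-line consequence. Your Case~A label (sign of the last term of a longest alternating subsequence, which is indeed well defined) is a harmless variant of the paper's choice (first nonzero coordinate), and the equivariance worry in Case~B is resolved exactly as the paper does it, by taking the $\preceq$-maximal part among those spanning at least $q$ edges and the last $q$ edges of that part.

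The genuine gap is the composite case. Reducing to a prime $p\geq r$ cannot work: the $\Z_p$-Tucker lemma would then yield $p$ pairwise disjoint parts and the inequality $n\leq\alt_p(\mathcal{H},\sigma,q)+(p-1)C$, i.e.\ a bound with denominator $p-1$ in place of $r-1$, which is strictly weaker than what is claimed (and the resulting monochromatic structure would be a $K^p_{t,\ldots,t}$, not a $K^r_{t,\ldots,t}$). The reduction actually used for~\eqref{alihajijctb}, and in this paper (Lemma~\ref{reduction}), is multiplicative: write $r=r_1r_2$, form the auxiliary hypergraph $\T=\T_{\mathcal{H},C,r_2,\sigma}$ whose edges are the sets $M$ with $|M|-\alt_{r_2}(\mathcal{H}[M],\sigma_M,q)>(r_2-1)C$, color $E(\T)$ by applying the $r_2$-case inside each such $M$, and then apply the $r_1$-case with $t=q=1$ to $\KG^{r_1}(\T)$. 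Making this legitimate requires the separate estimate $\alt_{r_1}(\T,\sigma,1)\leq r_1(r_2-1)C+\alt_{r_1r_2}(\mathcal{H},\sigma,q)$ (Lemma~\ref{altT}), which is a real step rather than a routine adaptation of the prime case; your proposal as written does not supply it, and the explicit mechanism you name would fail.
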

For a given positive integer $t$, let $l$ be the smallest nonnegative integer such that 
$$ q={k+l\choose k}\geq \left(\left\lceil{n-r(k+l-1)\over r-1}\right\rceil-1\right)\left(t-1\right)+1.$$
Theorem~\ref{genalihaji} implies that any coloring of $\KG^r_{n,k}$ with no monochromatic 
$K_{t,\ldots,t}^r$ subhypergraph uses at least 
$\left\lceil{n-r(k+l-1)\over r-1}\right\rceil$ colors. Note that the case $t=1$ concludes 
the chromatic number of Kneser hypergraphs $\KG^r_{n,k}$. \\

\noindent{\bf Plan.} This paper is organized as follows. In Section~\ref{tools}, we introduce some tools  
which will be needed throughout the paper. Section~\ref{proofs} is devoted to the 
proof of main theorems. In the last section, we present a generalization of Theorem~\ref{Kupavskiimain} with a purely combinatorial proof which implies this theorem immediately. 

\section{Tools}\label{tools}
\subsection{\bf Random General Kneser Hypergraphs}
%%%%%%%%%%%%%%%%%%%%%%%%%%%%%%%%%%%%%%%%%
Let $\mathcal{H}=(V(\mathcal{H}),E(\mathcal{H}))$ be a hypergraph and $r, s, C$ be positive integers, where $r,s\geq 2$. 
Also, let $\sigma:[n]\longrightarrow V(\mathcal{H})$ be a bijection.
Let $M\subseteq V(\mathcal{H})$ be an $m$-set, where 
$\sigma^{-1}(M)=\{i_1,\ldots,i_m\}$ and $i_1<\cdots<i_m$. By $\sigma_M$, we mean the following bijective map; 
$$\begin{array}{lcll}
\sigma_M: & [m] & \longrightarrow & M\\
& j &\longmapsto &\sigma(i_j).
\end{array}$$

Define $\T=\T_{\mathcal{H},C,s,\sigma}$ to be a hypergraph with vertex set $V(\mathcal{H})$ and edge set
$$E(\T)=\left\{M\subseteq V(\mathcal{H}):\; M\neq\varnothing \mbox{ and } |M|-{\rm alt}_s(\mathcal{H}[M],\sigma_M,q) > (s-1)C\right\}.$$

The next lemma, for $q=1$, is implicitly used in the proof of Theorem~\ref{alihajijctb} in~\cite{2013arXiv1302.5394A}.  Also, a similar lemma is proved in~\cite{HaMe16}.
However, for sake of completeness, we state it here with a proof.
\begin{lemma}\label{altT}
Let $\mathcal{H}=(V(\mathcal{H}),E(\mathcal{H}))$ be a hypergraph and $r, s, C$ and $q$ be positive integers, where $r,s\geq 2$. 
Then for any bijection $\sigma: [n]\longrightarrow  V(\mathcal{H})$, we have
$$\alt_r(\T,\sigma,1)\leq r(s-1)C+\alt_{rs}(\mathcal{H},\sigma,q).$$
\end{lemma}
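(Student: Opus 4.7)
The strategy is to ``inflate'' an optimal witness $X\in(\Z_r\cup\{0\})^n$ for $\alt_r(\T,\sigma,1)$ into a witness $Z\in(\Z_{rs}\cup\{0\})^n$ for $\alt_{rs}(\mathcal{H},\sigma,q)$ by refining each of the $r$ colors of $\Z_r$ into a block of $s$ colors of $\Z_{rs}$, using the definition of $\T$ to supply the refinements. Concretely, fix $X=(X^1,\ldots,X^r)$ realizing $\alt_r(\T,\sigma,1)$. Since by feasibility $\sigma(X^i)$ contains no edge of $\T$, applying the defining inequality of $\T$ to the set $M=\sigma(X^i)$ itself gives, for each $i$, a signed vector $Y^{(i)}\in(\Z_s\cup\{0\})^{n_i}$ (where $n_i:=|X^i|$) with $\alt(Y^{(i)})\ge n_i-(s-1)C$ and with $|E(\mathcal{H}[\sigma_{\sigma(X^i)}((Y^{(i)})^l)])|\le q-1$ for every $l\in[s]$. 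Identifying $\Z_{rs}\setminus\{0\}$ with the disjoint union of $r$ consecutive blocks of size $s$, I would assemble $Z$ by mapping the $j$th smallest position of $X^i$ into block $i$ according to $(Y^{(i)})_j$, setting $z_k=0$ whenever either $x_k$ or the corresponding $(Y^{(i)})_j$ is $0$. By construction $\sigma(Z^{(i-1)s+l})=\sigma_{\sigma(X^i)}((Y^{(i)})^l)$, so the edge-count condition for $Z$ is inherited directly from the $Y^{(i)}$'s, making $Z$ a feasible vector in the definition of $\alt_{rs}(\mathcal{H},\sigma,q)$.

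Next I would lower-bound $\alt(Z)$. For each $i$, pick $B_i\subseteq[n_i]$ of size at least $n_i-(s-1)C$ on which $Y^{(i)}$ restricts to an alternating sequence, and let $Q_i\subseteq X^i$ be the corresponding set of positions in $[n]$. On $Q:=\bigsqcup_i Q_i$ every entry of $Z$ is nonzero; the claim is that reading the $Z$-entries in $[n]$-order along $Q$ produces an alternating sequence. Two consecutive $Q$-entries drawn from different $Q_i,Q_{i'}$ sit in distinct blocks of $\Z_{rs}$, so they differ automatically; two consecutive entries both in $Q_i$ cannot be separated by any other position of $Q\supseteq Q_i$, so they correspond to successive indices in $B_i$, whence the alternating property of $Y^{(i)}|_{B_i}$ forces them to differ. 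Therefore $\alt(Z)\ge|Q|=\sum_i|B_i|\ge\sum_i n_i-r(s-1)C\ge\alt(X)-r(s-1)C$, using that an alternating subsequence of $X$ only uses nonzero entries, so $\alt(X)\le\sum_i n_i$.

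Combining the two bounds gives $\alt_{rs}(\mathcal{H},\sigma,q)\ge\alt(Z)\ge\alt_r(\T,\sigma,1)-r(s-1)C$, which is the desired inequality after rearrangement. The main technical hurdle is the alternation bookkeeping in the middle step: one has to check that interleaving positions drawn from different colors of $X$ does not corrupt the alternation of the inflated vector $Z$. This is exactly where the block structure of $\Z_{rs}$ is used — transitions between different $Q_i$'s are free because the blocks for distinct $i$'s are disjoint subsets of $\Z_{rs}\setminus\{0\}$, while transitions within a single $Q_i$ are covered by the alternation of $Y^{(i)}$.
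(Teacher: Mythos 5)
Your proposal is correct and follows essentially the same route as the paper: both arguments take an optimal witness $X$ for $\alt_r(\T,\sigma,1)$, use the fact that each $\sigma(X^i)$ is not an edge of $\T$ to extract an $s$-signed witness $Y^{(i)}$ for $\mathcal{H}[\sigma(X^i)]$, and splice these into a $\Z_{rs}$-signed vector $Z$ whose blocks certify the lower bound on $\alt_{rs}(\mathcal{H},\sigma,q)$. The only cosmetic difference is that the paper first normalizes each $Y^{(i)}$ so that $\alt(Y^{(i)})=|Y^{(i)}|$ and then concludes $\alt(Z)=|Z|$ directly, whereas you keep the full $Y^{(i)}$ and extract the alternating index sets $B_i$ at the end.
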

\begin{proof}
If $\alt_r(\T,\sigma,1)=0$, then there is nothing to prove. Therefore, we may assume that 
$\alt_r(\T,\sigma,1)>0$. 

For simplicity of notation, 
without loss of generality, suppose that $V(\mathcal{H})=[n]$ and $\sigma=I$ (the identity map).
Therefore, for each $A=\{a_1,\ldots,a_m\}\subseteq [n]$ ($a_1< \cdots< a_m$), we have
$$\begin{array}{lrll}
I_A:& [m] & \longrightarrow & A\\
	     & i    &\longmapsto    & a_i.
\end{array}
$$
In view of the definition of $\alt_r(\T,I,1)$, there is an $X=(X^1,\ldots,X^r)\in (\mathbb{Z}_r\cup\{0\})^n$ 
with ${\rm alt}(X)=|X|=\alt_r(\T,I,1)$ and 
such that $E(\mathcal{H}[X^i])=\varnothing$ for each $i\in[r]$. It implies that 
$X^i\not\in E(\T)$ for each $i\in[r]$. 
Let $I_0$ be the set of all $i\in[r]$, such that 
$X^i\neq\varnothing$. Note that since $\alt_r(\T,\sigma,1)>0$, we have $I_0\neq \varnothing$.
Consequently,  for each $i\in I_0$, 
we have 
$$|X^i|-{\rm alt}_s(\mathcal{H}[X^i],I_{X^i},q)\leq (s-1)C.$$
It implies that for each $i\in I_0$,  there is at least one  
$Y_i=(Y^{i,1},\ldots,Y^{i,s})\in(\mathbb{Z}_s\cup\{0\})^{|X^i|}$ such that 
\begin{itemize}
\item $\alt(Y_i)=|Y_i| \geq |X^i|-(s-1)C$ and
\item $|E(\mathcal{H}[I_{X^i}(Y^{i,j})])|<q$ for each $j\in[s]$.
\end{itemize}
Note that for each $i\in I_0$ and each $j\in[s]$, we have 
$$Y^{ij}\subseteq \{1,\ldots,|X^i|\}\quad\mbox{ and }\quad I_{X^i}:\{1,\ldots,|X^i|\}\longrightarrow X^i.$$ 
For each $i\in [r]\setminus I_0$, set $I_{X^i}(Y^{i,1})=\cdots=I_{X^i}(Y^{i,s})=\varnothing$. 
Define 
$$
\begin{array}{lll}
Z& =& \left(I_{X^1}(Y^{1,1}),\ldots,I_{X^1}(Y^{1,s}),\ldots,
I_{X^r}(Y^{r,1}),\ldots,I_{X^r}(Y^{r,s})\right)\\
& =& (Z^1,Z^2,\ldots,Z^{rs})\in(\mathbb{Z}_{rs}\cup\{\zero\})^n.
\end{array}$$
One can simply see that $\alt(Z)=|Z|$. This implies that
$$
\begin{array}{lll}
\alt(Z) & =  & \ds\sum_{i=1}^r\sum_{j=1}^s| I_{X^i}(Y^{i,j})|\\
	  & =  & \ds\sum_{i\in I_0}|Y_i|\\ 
	  & \geq  & \ds\sum_{ i\in I_0}\left(|X^i|-(s-1)C\right)\\
	  & \geq  & |X|-r(s-1)C\\
	  & =  & \alt_r(\T,\sigma,1)-r(s-1)C.
\end{array}$$ 
In view of the definition of $\alt_{rs}(\mathcal{H},I,q)$ and since $|E(\mathcal{H}[Z^l])|<q$  for each $l\in [rs]$, 
we have 
$$\alt_{rs}(\mathcal{H},I,q)\geq \alt(Z)\geq\alt_r(\T,I,1)-r(s-1)C,$$
as desired.
\end{proof}
Now, we are ready to state the main lemma, which has a key role in the paper.
For the proof of this lemma, we need the following version of $Z_p$-Tucker lemma.
\begin{alphlemma}{\rm ($\Z_p$-Tucker lemma~\cite{MR2793613,MR1893009})}\label{zptucker}
Let $m, n, p$, and $\alpha$ be nonnegative integers, where $m, n\geq 1$, $m\geq \alpha \geq 0$, and $p$
is prime. Let
$$
\begin{array}{rl}
  \lambda:\ (\mathbb{Z}_p\cup\{0\})^n\setminus\{\zero\} & \longrightarrow \mathbb{Z}_p\times[m] \\
  X & \longmapsto(\lambda_1(X),\lambda_2(X))
\end{array}
$$
be a map satisfying the following properties:
\begin{itemize}
\item[{\rm (i)}] $\lambda$ is a $\mathbb{Z}_p$-equivariant map, that is, for each $\varepsilon\in \mathbb{Z}_p$,
we have $\lambda(\varepsilon X)=(\varepsilon\lambda_1(X),\lambda_2(X))$,
\item[{\rm (ii)}] for $X_1\subseteq X_2\in (\mathbb{Z}_p\cup\{0\})^n\setminus\{\zero\}$,
      if $\lambda_2(X_1)=\lambda_2(X_2)\leq\alpha$, then $\lambda_1(X_1)=\lambda_1(X_2)$,
\item[{\rm (iii)}] for $X_1\subseteq\cdots \subseteq X_p\in (\Z_p\cup\{0\})^n\setminus\{\zero\}$,
      if $\lambda_2(X_1)=\cdots=\lambda_2(X_p)\geq\alpha+1$,
      then 
$$\left|\left\{\lambda_1(X_1),\ldots,\lambda_1(X_p)\right\}\right|<p.$$
\end{itemize}
Then $\alpha+(m-\alpha)(p-1) \geq n$.
\end{alphlemma}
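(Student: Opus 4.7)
The plan is to establish the $\Z_p$-Tucker lemma by contradiction via a combinatorial chain-counting argument modulo $p$, in the spirit of Ziegler~\cite{MR1893009} and Meunier~\cite{MR2793613}. Suppose, toward contradiction, that $n \geq \alpha + (m-\alpha)(p-1) + 1$ and that a map $\lambda$ satisfying (i)--(iii) exists. I would work with the set $\mathcal{C}$ of maximal chains in the poset $P = ((\Z_p \cup \{0\})^n \setminus \{\zero\}, \subseteq)$, i.e., sequences $C = (X_1 \subsetneq X_2 \subsetneq \cdots \subsetneq X_n)$ with $|X_j| = j$. Since $\Z_p$ acts freely on the nonzero signed vectors (multiplication by the generator $\omega$ has no nonzero fixed point), it acts freely on $\mathcal{C}$ as well; in particular $|\mathcal{C}|$ is divisible by $p$, and equivariance (i) translates any invariant of label sequences into $\pmod p$ data.

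Fixing a chain $C$, I would analyze the label sequence $(\lambda(X_1),\ldots,\lambda(X_n))$ using (ii) and (iii). Property (ii) forces, for each level $b \leq \alpha$, that the indices $\{j: \lambda_2(X_j) = b\}$ all carry the same $\lambda_1$-value (the corresponding $X_j$ are nested and (ii) applies pairwise), contributing at most one distinct label per such~$b$. Property (iii) forces, for each level $b > \alpha$, at most $p-1$ distinct $\lambda_1$-values among $\{j: \lambda_2(X_j) = b\}$: otherwise all $p$ elements of $\Z_p$ would appear, and picking one index per value yields $p$ nested vectors with pairwise distinct first coordinates, violating (iii). Summing over $b$ gives at most $\alpha + (m-\alpha)(p-1) < n$ distinct labels on $C$, so by pigeonhole some repetition $\lambda(X_j) = \lambda(X_{j'})$ with $j < j'$ must occur along every chain.

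Finally, I would convert this forced repetition into the desired contradiction via a $\pmod p$ accounting on~$\mathcal{C}$. For each $C$ one selects a canonical witness---for example, the lexicographically first pair $(j,j')$ along $C$ at which the label repeats, or an analogous $\Z_p$-invariantly chosen piece of data---and groups chains by witness type. The free $\Z_p$-action on $\mathcal{C}$ makes each orbit's contribution divisible by $p$, while a direct enumeration of chains bearing a given witness type, driven by the structural analysis of (ii)--(iii) above, produces a count incompatible with $p$-divisibility. The principal obstacle is precisely this final accounting: because condition (iii) is a $p$-fold rather than pairwise constraint, the witness must be chosen so that $\Z_p$-translates of repetitions do not simply cancel, and matching this up with the chain-complex structure needs a delicate "sliding door" or pairing argument. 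This is the technically heaviest step in the original proofs of Ziegler and Meunier, even though the argument remains entirely elementary and purely combinatorial.
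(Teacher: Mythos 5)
First, a point of comparison: the paper does not prove Lemma~\ref{zptucker} at all --- it is imported verbatim from Ziegler and Meunier as a black box, so there is no internal proof to measure you against. Your sketch therefore has to stand on its own (or against the cited proofs), and as written it has a genuine gap. The first half is correct and standard: along a maximal chain $X_1\subsetneq\cdots\subsetneq X_n$, property (ii) forces a single $\lambda_1$-value on each level $b\leq\alpha$ and property (iii) forces at most $p-1$ values on each level $b\geq\alpha+1$, so at most $\alpha+(m-\alpha)(p-1)$ distinct labels occur, and if $n>\alpha+(m-\alpha)(p-1)$ some label repeats on two nested vectors. But such a repetition is not forbidden by (i)--(iii) --- nested vectors with equal labels are perfectly consistent with all three hypotheses (indeed (ii) \emph{mandates} equal $\lambda_1$ on nested vectors with equal small $\lambda_2$) --- so the pigeonhole step yields no contradiction by itself.

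Everything therefore rests on your final paragraph, and that paragraph is not a proof: it names a strategy (choose a canonical witness, group chains into free $\Z_p$-orbits, exhibit a count incompatible with $p$-divisibility) without exhibiting the witness, the count, or the incompatibility, and you yourself flag this as ``the technically heaviest step.'' This is precisely where the entire content of the lemma lives. The lemma is combinatorially equivalent to a Borsuk--Ulam-type statement, and the known elementary proofs (Ziegler's, Meunier's, and for $p=2$ the Freund--Todd/Fan arguments) do not proceed by ``pigeonhole plus orbit counting''; they simulate a degree or index computation by a careful path-following or signed-simplex count on (a subdivision of) the boundary of the $p$-fold join, in which the relevant special simplices are shown to have odd (or $\not\equiv 0 \pmod p$) count by an inductive door-in/door-out argument. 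A naive orbit count cannot work as stated: the set of chains carrying a repeated label is a union of free $\Z_p$-orbits and hence automatically has cardinality divisible by $p$, so no contradiction can emerge without identifying a much more delicate invariant. In short, the proposal is a correct framing of the setting together with an accurate description of where the difficulty lies, but the decisive step is missing.
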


\begin{lemma}\label{lem:main}
Let $d,q,r$, and $t$ be positive integers, where $d,r\geq 2$, and  $q\geq (d-1)(t-1)+1$.
Let $\mathcal{H}$ be a hypergraph and $\preceq$ be a total ordering on the power set 
of $V(\mathcal{H})$, which refines the partial ordering according to size.
Moreover, let $\sigma: [n]\longrightarrow V(\mathcal{H})$ be a bijection.
Then for any coloring  $c:E(\mathcal{H})\longrightarrow [C]$, where 
$1\leq C<\min\{{n-{\rm alt}_r(\mathcal{H},\sigma,q)\over r-1},d\}$, there 
exists an $r$-tuple $(N_1,\ldots,N_r)$ with the following properties; 
\begin{itemize}
\item $N_1,\ldots,N_r$ are pairwise disjoint subsets of $[n]$.
\item For each $j\in[r]$, $|E(\mathcal{H}[\sigma(N_j)])|\geq q$.
\item For each $j\in[r]$, there are $t$ distinct edges $e_{1,j},\ldots,e_{t,j}\subseteq \sigma(N_j)$ chosen from the last $q$ largest 
edges in  $E(\mathcal{H}[\sigma(N_j)])$ {\rm (}according to the total ordering $\preceq${\rm )} such that all edges in $\{e_{i,j}:i\in[t]\;\&\; j\in[r]\}$ receive the same color $c(N_1,\ldots,N_r)\in[C]$.
\end{itemize}
\end{lemma}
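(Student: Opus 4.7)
The plan is to argue by contradiction via the $\Z_p$-Tucker lemma (Lemma~\ref{zptucker}) with $p=r$. Set $\alpha=\alt_r(\mathcal{H},\sigma,q)$ and $m=\alpha+C$, and suppose no $r$-tuple $(N_1,\ldots,N_r)$ with the stated properties exists. I will build a map $\lambda\colon(\Z_r\cup\{0\})^n\setminus\{\zero\}\to\Z_r\times[m]$ satisfying the three hypotheses of Lemma~\ref{zptucker}; the resulting bound $\alpha+(m-\alpha)(r-1)\geq n$ forces $C(r-1)\geq n-\alpha$, contradicting $C<(n-\alpha)/(r-1)$.

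To define $\lambda(X)=(\lambda_1(X),\lambda_2(X))$ for $X=(X^1,\ldots,X^r)$, I split into two cases. If $|E(\mathcal{H}[\sigma(X^i)])|\leq q-1$ for every $i\in[r]$, then $\alt(X)\leq\alpha$, and I set $\lambda_2(X):=\alt(X)$ and $\lambda_1(X):=\omega^{j}$, where $\omega^{j}$ is the value taken by the latest-ending element of a longest alternating subsequence of $X$. Otherwise, some $X^i$ has $|E(\mathcal{H}[\sigma(X^i)])|\geq q$; look at the $q$ largest edges of $\mathcal{H}[\sigma(X^i)]$ under $\preceq$. Since $q\geq (d-1)(t-1)+1>C(t-1)$ (using $C<d$), pigeonhole forces some color to appear at least $t$ times in this top-$q$ family; let $c^*(X^i)$ be the smallest such color. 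Among all $i$ with $|E(\mathcal{H}[\sigma(X^i)])|\geq q$, pick $i^*(X)$ in a $\Z_r$-equivariant manner, for instance the $i$ whose top-$q$ edge-tuple is lexicographically smallest under $\preceq$. Set $\lambda_1(X):=\omega^{i^*(X)}$ and $\lambda_2(X):=\alpha+c^*(X^{i^*(X)})\in\{\alpha+1,\ldots,m\}$.

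Verifying the Tucker hypotheses: equivariance (i) is immediate in both cases, since shifting $X$ by $\omega$ shifts both the color of the latest alternating element and the lex-chosen $i^*$. Monotonicity (ii) for $\lambda_2\leq\alpha$ is the standard alternation argument: if $X_1\subseteq X_2$ are both in the first case with equal alternation number, the latest-ending maximum alternating subsequence of $X_2$ must end at the same color as that of $X_1$, and the monotonicity of edge counts prevents a chain from leaving the first case while keeping $\lambda_2$ fixed. For the collision-avoidance condition (iii), suppose $X_1\subseteq\cdots\subseteq X_r$ all lie in the second case with common $\lambda_2=\alpha+c$, and, for contradiction, that $\{\lambda_1(X_k):k\in[r]\}=\Z_r$. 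Writing $i_k^*=i^*(X_k)$, the assumption says $\{i_1^*,\ldots,i_r^*\}=[r]$; let $k(j)$ be the unique $k$ with $i_k^*=j$ and define $N_j:=X_{k(j)}^{j}$. Since $X_{k(j)}^{j}\subseteq X_r^{j}$ and $X_r^{1},\ldots,X_r^{r}$ are pairwise disjoint, the $N_j$ are pairwise disjoint. Each $|E(\mathcal{H}[\sigma(N_j)])|\geq q$ by construction, and its top-$q$ edges contain $\geq t$ of the common color $c$ shared by all levels of the chain, which is exactly the forbidden tuple. This contradiction proves (iii).

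The main obstacle I expect is orchestrating the equivariant selections $c^*$ and $i^*$ so that a violation of (iii) actually yields a pairwise-disjoint, monochromatic $r$-tuple with $t$ top-$q$ edges in each part. The family of top-$q$ edges is not monotone under set inclusion along the chain, so the argument must read off one edge-set from each $X_k$ separately, rather than compare top-$q$ families at different levels; fortunately the chain is only needed to harvest $r$ distinct indices $i_k^*$, after which disjointness of the $N_j$ is automatic from the structural disjointness of $X_r^1,\ldots,X_r^r$. Once $\lambda$ passes all three Tucker conditions, Lemma~\ref{zptucker} yields the contradiction stated above, completing the proof.
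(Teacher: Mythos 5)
Your construction of $\lambda$ and the verification of conditions (i)--(iii) of Lemma~\ref{zptucker} are essentially the paper's own argument for the prime case, with harmless variants: you split cases by edge counts rather than by $\alt(X)$ (equivalent here, since $\lambda_2\leq\alpha$ still forces the first case), use the latest-ending value of a longest alternating subsequence instead of the first nonzero coordinate for $\lambda_1$ in the low range, and break ties for $i^*$ lexicographically on top-$q$ tuples rather than by taking the $\preceq$-maximum of the sets $X^j$ with $|E(\mathcal{H}[\sigma(X^j)])|\geq q$. All of these are equivariant and satisfy (ii), and your extraction of the $r$-tuple $(N_1,\dots,N_r)$ from a violation of (iii) is correct.

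The genuine gap is that you invoke Lemma~\ref{zptucker} with $p=r$ for an \emph{arbitrary} integer $r\geq 2$, whereas that lemma requires $p$ to be prime; its combinatorial proof, and the equivariant topology behind it, fail for composite $p$, so the statement is simply not available in the generality you use it. As written, your argument therefore proves the lemma only for prime $r$. The paper closes exactly this hole with a separate reduction: it introduces the auxiliary hypergraph $\T=\T_{\mathcal{H},C,s,\sigma}$, controls $\alt_{r_1}(\T,\sigma,1)$ via Lemma~\ref{altT}, and proves in Lemma~\ref{reduction} that validity of the statement for $r_1$ and for $r_2$ implies validity for $r_1r_2$ (coloring the edges of $\T$ by the colors of the monochromatic $r_2$-tuples found inside them, then applying the $r_1$ case with $t=q=1$); induction on the prime factorization then yields all $r\geq 2$. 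Your proposal contains no substitute for this step, so you need either to add such a reduction or to restrict the claim to prime $r$.
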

%\noindent{\bf Proof of Lemma~\ref{lem:main}.}
\begin{proof}
The proof is divided into two parts. 
First, we prove the theorem when $r$ is prime. 
Then, we reduce the nonprime case to the prime case, which completes the proof.

First, assume that $r=p$ is a prime number. Consider an arbitrary coloring $c:E(\mathcal{H})\longrightarrow [C]$ such that $1\leq C<\min\{{n-{\rm alt}_r(\mathcal{H},\sigma,q)\over r-1},d\}$. 
Without loss of generality and for simplicity of notation,  we may assume that $V(\mathcal{H})=[n]$ and  $\sigma=I$ is the identity map.
Set  $m={\rm alt}_p({\mathcal H},I,q)+C$
and $\alpha={\rm alt}_p({\mathcal H},I,q)$.
Let  $$\begin{array}{crcc}
\lambda: & (\mathbb{Z}_p\cup\{0\})^n\setminus\{\zero\} &\longrightarrow & \mathbb{Z}_p\times[m]\\
& X &\longmapsto & (\lambda_1(X),\lambda_2(X))
\end{array}$$ 
be a map defining as follows. 
\begin{itemize}
\item If ${\rm alt}(X)\leq \alpha$, then define $\lambda_1(X)$ to be the first 
nonzero coordinate of $X$ and $\lambda_2(X)={\rm alt}(X)$.
\item If ${\rm alt}(X)\geq \alpha+1$, then, in view of the definition of 
${\rm alt}_p({\mathcal H},I,q)$, there is at least one $i\in[p]$ such that $|E(\mathcal{H}[X^i])|\geq q$. 
Choose $\omega^i\in\mathbb{Z}_p$ such that 
$$X^i=\max\{X^j:\; |E(\mathcal{H}[X^j])|\geq q\},$$
where the maximum is taken according to the total ordering $\preceq$.
Now, see all edges in $E(\mathcal{H}[X^i])$ as a chain (according to the total ordering $\preceq$) and 
consider the last $q$ edges of this chain.
In other words, if $E(\mathcal{H}[X^i])=\{e_1,\ldots,e_{m}\}$, where $e_1\prec\cdots\prec e_m$, then
consider $e_{m-q+1},\ldots,e_{m}$.
Define $c(X)$ to be the 
most popular color amongst all colors assigned  to these $q$ edges.
If there is more than one such a color, then choose the maximum one.  
Clearly the frequency of this color is at least $\lceil{q\over C}\rceil\geq t$
(note that ${q\over C}\geq{q\over d-1 }> t-1$).
Define $$\lambda(X)=(\omega^i,\alpha+c(X)).$$
\end{itemize}
%One can check that $\lambda$ satisfies the conditions of Lemma~\ref{zptucker}. 
It is straightforward to check that the map  $\lambda$ satisfies  Property~(i) and Property~(ii) of Lemma~\ref{zptucker}. Since 
$$n-{\rm alt}_p({\mathcal H},I,q)=n-\alpha> (m-\alpha)(p-1)=C(p-1),$$
the map $\lambda$ does~not satisfy Property~(iii) of Lemma~\ref{zptucker}. 
Thus, there is a chain $X_1\subseteq\cdots \subseteq X_p\in (\mathbb{Z}\cup\{0\})^n\setminus\{\zero\}$,
      such that  $i=\lambda_2(X_1)=\cdots=\lambda_2(X_p)\geq\alpha+1$ and  
$\left|\left\{\lambda_1(X_1),\ldots,\lambda_1(X_p)\right\}\right|=p.$ 
Hence, we have $\left\{\lambda_1(X_1),\ldots,\lambda_1(X_p)\right\}=\Z_p$.
Let $\pi:[p]\longrightarrow [p]$ be the bijection for which we have 
$\lambda_1(X_j)=\omega^{\pi(j)}$ for each $j\in[p]$.  
Define $N_j=X_j^{\pi(j)}\subseteq X_p^{\pi(j)}$
for each $j\in[p]$. Since  
the sets $X_p^j$'s are pairwise disjoint, the sets $N_j$'s are pairwise disjoint as well. 
In view of the definition of $\lambda$, for each $j\in[p]$, there are at least $t$ 
edges $e_{1,j},\ldots,e_{t,j}\subseteq N_j$ such that 
these edges are amongst the last $q$ largest edges in $E(\mathcal{H}[N_j])$ and 
$c(e_{1,j})=\cdots=c(e_{t,j})=c(X_j)=i-\alpha$. It implies that all edges in $\{e_{i,j}:i\in[t]\;\&\; j\in[r]\}$ receive the same color $i-\alpha$. Clearly, for $c(N_1,\ldots,N_p)=i-\alpha$, the $p$-tuple $(N_1,\ldots,N_p)$ has the desired properties. 
\begin{lemma}\label{reduction}
If Lemma~{\rm\ref{lem:main}} holds for $r=r_1$ and $r=r_2$, then it holds for $r=r_1r_2$.
\end{lemma}
\begin{proof}
Let $c:E(\mathcal{H})\longrightarrow [C]$ be a coloring such that 
$$1\leq C<\min\left\{{|V(\mathcal{H})|-{\rm alt}_{r_1r_2}(\mathcal{H},\sigma,q)\over r_1r_2-1},d\right\}.$$
Note that this implies that $C<d$. 
Consider the hypergraph $\T=\T_{\mathcal{H},C,r_2,\sigma}$. First,
we define a coloring $f:E(\T)\longrightarrow [C]$.
For each $M\in E(\T)$, in view of the definition of $\T$, we have 
$$|M|-{\rm alt}_{r_2}(\mathcal{H}[M],\sigma_M,q) > (r_2-1)C.$$
Hence, 
$$1\leq C<\min\left\{{|M|-{\rm alt}_{r_2}(\mathcal{H}[M],\sigma_M,q)\over r_2-1},d\right\}.$$
Consider the hypergraph $\mathcal{H}[M]$ and the coloring $c$ restricted to the edges of $\mathcal{H}[M]$. 
Let $(N_1,\ldots,N_{r_2})$ be an 
$r_2$-tuple whose existence is ensured since we have assumed that Lemma~\ref{lem:main} is true for $r=r_2$.
Note that $N_1,\ldots,N_{r_2}$ are pairwise disjoint subsets of $\{1,\ldots,|M|\}$.
Now, define $f(N)=c(N_1,\ldots,N_{r_2})$.
In view of lemma~\ref{altT}, we have
$$\begin{array}{lll}
n-\alt_{r_1}(\T,\sigma,1) & \geq & n-r_1(r_2-1)C-\alt_{r_1r_2}(\mathcal{H},\sigma,q)\\
			       &    >   & (r_1r_2-1)C-r_1(r_2-1)C\\
			       &   =    & (r_1-1)C.
\end{array}$$
It implies that 
$$1\leq C<\min\left\{{|V(\T)|-{\rm alt}_{r_1}(\T,\sigma,1)\over r_1-1},d\right\}.$$
Since Lemma~\ref{lem:main} holds for $r=r_1$, if we set $t=q=1$, then
there are $r_1$ pairwise vertex-disjoint edges $M_1,\ldots,M_{r_1}\in E(\T)$
such that $f(M_1)=\cdots=f(M_{r_1})=i$. 
Now, for each $i\in[r_1]$, let $(N_{i,1},\ldots,N_{i,r_2})$ be the $r_2$-tuple, which is
used for the definition of  $f(M_i)$. Now, one can see that the $r_1r_2$ tuple
$$P=\left(N_{1,1},\ldots,N_{1,r_2},\ldots,N_{r_1,1},\ldots,N_{r_1,r_2}\right)$$  with $c(P)=i$ has the desired properties. 
\end{proof}
By induction, Lemma~\ref{reduction},  and the fact  that
Lemma~\ref{lem:main} is true for any prime number $r$, the proof is 
completed.
\end{proof}

%%%%%%%%%%%%%%%%%%%%%%%%%%%%%%%%%%%%%%%%%%%%
\subsection{\bf Random Kneser Hypergraphs and Schrijver Graphs}
In this subsection, we present two specializations of Lemma~\ref{lem:main}, which will be useful for computing the chromatic number of random Kneser hypergraphs $\KG^r_{n,k}(\rho)$ and random Schrijver Graphs $\SG_{n,k}(\rho)$.  
\begin{lemma}\label{lem:mainken}
Let $n, k, r$ and $l$ be nonnegative integers, where $r\geq 2$, $k\geq 1$,  $n\geq r k$, and 
$d= \left\lceil{n-r(k+l-1)\over r-1}\right\rceil\geq 2$.  
Set $t=\ds\left\lceil{{k+l\choose k}\over d-1}\right\rceil$. 
Then for any coloring  $c:{[n]\choose k}\longrightarrow [C]$, where 
$1\leq C<d$, there 
exists an $r$-tuple $(N_1,\ldots,N_r)$ with the following properties; 
\begin{itemize}
\item $N_1,\ldots,N_r$ are pairwise disjoint $(k+l)$-subsets of $[n]$.
%\item For each $j\in[r]$, $|N_j|=k+l$.
\item For each $j\in[r]$, there are $t$ distinct $k$-subsets 
$e_{1,j},\ldots,e_{t,j}\subseteq N_j$  such that all members of  
$\{e_{i,j}:i\in[t]\;\&\; j\in[r]\}$ receive the same color $c(N_1,\ldots,N_r)\in[C]$.
\end{itemize}
\end{lemma}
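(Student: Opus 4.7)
The plan is to specialize Lemma~\ref{lem:main} to the hypergraph $\mathcal{H}=K_n^k=([n],\binom{[n]}{k})$ with $\sigma=I$ the identity, the parameter $q=\binom{k+l}{k}$, and a cleverly chosen total ordering $\preceq$ on $2^{[n]}$. The inequality $q\geq(d-1)(t-1)+1$ required by Lemma~\ref{lem:main} is immediate from $t=\lceil q/(d-1)\rceil$. For the other hypothesis I invoke the bound $\alt_r(K_n^k, I, q)\leq r(k+l-1)$ observed in the introduction: any alternating subsequence of length $r(k+l-1)+1$ uses some $\omega^s\in\Z_r$ at least $k+l$ times by pigeonhole, so the coordinate set $X^s$ has at least $k+l$ elements and $|E(K_n^k[X^s])|\geq\binom{k+l}{k}=q$. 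Combined with $d=\lceil(n-r(k+l-1))/(r-1)\rceil$, this gives $d-1<(n-\alt_r(K_n^k,I,q))/(r-1)$, so the assumption $C<d$ of Lemma~\ref{lem:mainken} is enough to feed Lemma~\ref{lem:main}.

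The crux is the choice of $\preceq$. I will take the lexicographic order on $k$-subsets (viewing each as an increasing tuple and comparing lex-wise) and extend it to the full power set by first comparing sizes. The key combinatorial claim is: for any $M=\{m_1<\cdots<m_s\}\subseteq[n]$ with $s\geq k+l$, writing $N=\{m_{s-k-l+1},\ldots,m_s\}$ for the top $k+l$ elements of $M$, the top $q=\binom{k+l}{k}$ edges of $K_n^k[M]$ in the order $\preceq$ are exactly $\binom{N}{k}$. Indeed, any $k$-subset $T\subseteq M$ with $T\not\subseteq N$ contains an element $\leq m_{s-k-l}$, so its minimum is strictly less than $m_{s-k-l+1}$, which is the minimum of every $e\in\binom{N}{k}$; hence $T\prec e$ for all such $e$, and since $|\binom{N}{k}|=q$ this identifies the top $q$.

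With this setup in hand, Lemma~\ref{lem:main} produces pairwise disjoint subsets $M_1,\ldots,M_r\subseteq[n]$ with $\binom{|M_j|}{k}\geq q$ (hence $|M_j|\geq k+l$) and, for each $j$, $t$ distinct $k$-subsets $e_{1,j},\ldots,e_{t,j}\subseteq M_j$ drawn from the top $q$ edges of $K_n^k[M_j]$, all receiving a single color. Taking $N_j$ to be the top $k+l$ elements of $M_j$ yields pairwise disjoint $(k+l)$-subsets, and by the lex claim each $e_{i,j}$ lies in $\binom{N_j}{k}$, so $(N_1,\ldots,N_r)$ has the required properties. The only subtle point in the whole argument is the combinatorial identification of the top $q$ edges with $\binom{N}{k}$, which is precisely what permits the reduction from potentially oversized $M_j$'s down to exactly $(k+l)$-subsets $N_j$.
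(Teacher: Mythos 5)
Your proof is correct, but it takes a genuinely different route from the paper's. The paper proves Lemma~\ref{lem:mainken} without touching Lemma~\ref{lem:main}: it assigns to each $(k+l)$-set $L$ the most popular $c$-color among the $\binom{k+l}{k}$ $k$-subsets of $L$ (which occurs at least $\lceil q/C\rceil\geq t$ times since $C\leq d-1$), observes that this auxiliary coloring of $\KG^r_{n,k+l}$ uses fewer than $\chi(\KG^r_{n,k+l})=d$ colors and hence is improper, and reads the desired $r$-tuple off a monochromatic edge of $\KG^r_{n,k+l}$. You instead specialize Lemma~\ref{lem:main} to $\mathcal{H}=K_n^k$ with $q=\binom{k+l}{k}$, and the two verifications you supply are exactly the ones needed and are both sound: the pigeonhole bound $\alt_r(K_n^k,I,q)\leq r(k+l-1)$ shows $C<d$ already implies $C<\min\left\{\frac{n-\alt_r(K_n^k,I,q)}{r-1},d\right\}$, and the lexicographic ordering (refining size, comparing minima first) guarantees that the $q$ largest $k$-subsets of any $M$ with $|M|\geq k+l$ are precisely $\binom{N}{k}$ for $N$ the top $k+l$ elements of $M$, which is what lets you trim the possibly oversized output sets of Lemma~\ref{lem:main} down to honest $(k+l)$-sets $N_j$. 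The trade-off: the paper's argument is a short, self-contained reduction to the Alon--Frankl--Lov\'asz lower bound $\chi(\KG^r_{n,k+l})\geq d$, whereas yours exhibits the specialized lemma as a literal corollary of the general Lemma~\ref{lem:main}, at the cost of invoking the $\Z_p$-Tucker machinery and the extra combinatorial bookkeeping about the ordering.
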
 
\begin{proof}
Consider an arbitrary coloring $c:{[n]\choose k}\longrightarrow [C]$, where 
$1\leq C<d=\left\lceil{n-r(k+l-1)\over r-1}\right\rceil$. 
Clearly, the assumption $d\geq 2$ implies that $n\geq r(k+l)$. 
Define the coloring $f:V(\KG^r_{n,k+l})\longrightarrow [C]$ as follows.  
For each $(k+l)$-set $L\in V(\KG^r_{n,k+l})$, set $f(L)$ to be the most popular
color (with respect to the coloring $c$) amongst the members of 
$\{A\;:\; |A|=k\mbox{ and } A\subseteq L\}\subseteq {[n]\choose k}$.  
If there is more than one such a color, then choose the maximum one. 
We already know that $\chi(\KG^r_{n,k+l})=d$.
Since $C< d=\chi(\KG^r_{n,k+l})$, the coloring $f$ is~not proper.
Consequently, there are $r$ pairwise disjoint $(k+l)$-sets $N_1,\ldots,N_r\subseteq [n]$ 
such that $f(N_1)=\cdots=f(N_r)=i\in [C]$. 
In view of the definition of $f$, one can simply see that
$P=(N_1,\ldots,N_r)$ with  $c(P)=i$ is the desired $r$-tuple.
\end{proof}

Also, we can have a similar statement for Schrijver graphs.
\begin{lemma}\label{SGlemma}
Let $n, k$ and $l$ be nonnegative integers, where  $n\geq 2 k\geq 2$, and 
$d= n-2(k+l-1)\geq 2$.  Set $t=\left\lceil{{k+l\choose k}\over d-1}\right\rceil$.
For any coloring  $c:{[n]\choose k}_{stable}\longrightarrow [C]$ with 
$1\leq C<d$, there 
is a pair $(N_1,N_2)$ with the following properties.
\begin{itemize}
\item $N_1$ and $N_2$ are disjoint stable $(k+l)$-subsets of $[n]$. 
\item For $j=1,2$, there are $t$ distinct stable $k$-sets 
$e_{1,j},\ldots,e_{t,j}\subseteq N_j$ such 
that all members of $\{e_{i,j}:i\in[t]\;\&\; j\in[2]\}$ receive the same color $c(N_1,N_2)\in[C]$.
\end{itemize}
\end{lemma}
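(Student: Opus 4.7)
The plan is to mirror the argument used in Lemma \ref{lem:mainken}, replacing the use of the Alon--Frankl--Lov\'asz theorem on $\chi(\KG^r_{n,k+l})$ with Schrijver's theorem that $\chi(\SG_{n,k+l}) = n - 2(k+l) + 2 = d$. Given a coloring $c$ of the stable $k$-sets with $C < d$ colors, I will lift it to a coloring $f$ of the vertex set of $\SG_{n,k+l}$ and apply Schrijver's theorem to produce the desired pair.

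First, I would make the simple but crucial observation that any subset of a stable set is itself stable: if $L \subseteq [n]$ satisfies $2 \leq |i-j| \leq n-2$ for all distinct $i,j \in L$, then the same inequality holds for any subset. Consequently, for every stable $(k+l)$-set $L$, all $\binom{k+l}{k}$ of its $k$-subsets are stable, so $c$ assigns a color in $[C]$ to each of them.

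Now I would define $f : \binom{[n]}{k+l}_{stable} \longrightarrow [C]$ by letting $f(L)$ be the most popular color (under $c$) among the $\binom{k+l}{k}$ stable $k$-subsets of $L$, breaking ties by choosing the maximum such color. By the pigeonhole principle, the chosen color appears on at least
$$\left\lceil \frac{\binom{k+l}{k}}{C} \right\rceil \geq \left\lceil \frac{\binom{k+l}{k}}{d-1} \right\rceil = t$$
of these stable $k$-subsets. Since $C < d = \chi(\SG_{n,k+l})$, the map $f$ cannot be a proper coloring of the Schrijver graph, so there exist disjoint stable $(k+l)$-sets $N_1, N_2 \subseteq [n]$ with $f(N_1) = f(N_2) = i$ for some $i \in [C]$. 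Setting $c(N_1,N_2) = i$ and selecting, for each $j \in [2]$, any $t$ of the stable $k$-subsets of $N_j$ whose $c$-color is $i$ gives the required edges $e_{1,j}, \ldots, e_{t,j}$.

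There is essentially no obstacle here beyond recognizing that stable-closure under taking subsets makes the lift $c \mapsto f$ well defined, and that the relevant chromatic number is supplied by Schrijver's theorem (rather than by the Alon--Frankl--Lov\'asz bound used in Lemma \ref{lem:mainken}). The only bookkeeping point worth flagging is the tie-breaking rule in the definition of $f$, which ensures $f$ is a bona fide coloring, and the inequality $\lceil \binom{k+l}{k}/C\rceil \geq t$, which uses the hypothesis $C \leq d-1$ in the definition of $t$.
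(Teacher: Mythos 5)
Your proposal is correct and follows essentially the same route as the paper: lift $c$ to a most-popular-color coloring $f$ of $V(\SG_{n,k+l})$ (using that subsets of stable sets are stable), invoke Schrijver's theorem $\chi(\SG_{n,k+l})=d$ to find disjoint $N_1,N_2$ with $f(N_1)=f(N_2)$, and extract the $t$ monochromatic stable $k$-subsets by pigeonhole. The only difference is that you spell out the pigeonhole bound $\lceil \binom{k+l}{k}/C\rceil\geq t$ explicitly, which the paper leaves implicit.
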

\begin{proof}
Consider an arbitrary coloring $c:{[n]\choose k}_{stable}\longrightarrow [C]$, where 
$1\leq C<d=n-2(k+l-1)$. 
Define the coloring $f:V(\SG_{n,k+l})\longrightarrow [C]$ as follows. 
For each stable $(k+l)$-set $L\in V(\SG_{n,k+l})$, set $f(L)$ to be the  
most popular color (with respect to the coloring $c$) amongst the members 
of $\{A\;:\; |A|=k \mbox{ and } A\subseteq L\}\subseteq {[n]\choose k}_{stable}$. 
If there is more than one such a color, then choose the maximum one. 
Since $C<\chi(\SG_{n,k+l})=d$, there are two disjoint stable $(k+l)$-sets $N_1,N_2\subseteq [n]$ such that $f(N_1)=f(N_2)$, which completes the proof.
\end{proof}
%%%%%%%%%%%%%%%%%%%%%%%%%%%%%%%%%%%%%%%%%%%%

%%%%%%%%%%%%%%%%%%%%%%%%%%%%%%%%%%%%%%%%%%
%%%%%%%%%%%%%%%%%%%%%%%%%%%%%%%%%%%%%%%%%%
\section{Proofs of Main Results}\label{proofs}
This section is completely devoted to the proof of main results. \\
%%%%%%%%%%%%%%%%%%%%%%%%%%%%%%%%%%%%%%%%%%%%%%
%%%%%%%%%%%%%%%%%%Proof of Theorem%%%%%%%%%%%%%%%%%%%

\noindent{\bf Proof of Theorem~\ref{thm:main}.}
Assume that at least one of two mentioned conditions in the assertion of the theorem holds.
For an arbitrary $m\in\mathbb{N}$, set $\mathcal{H}_m=\mathcal{H}$.
Let $\preceq$ be a total ordering on the power set 
of $V(\mathcal{H})$, which refines the partial ordering according to size. Let 
$\sigma:[n]\longrightarrow V(\mathcal{H})$ be a bijection for which we have
$\alt_r(\mathcal{H},\sigma,q)=\alt_r(\mathcal{H},q)$.\\

\noindent{\bf The Event $\mathbf{E}$.}  Define $\mathbf{E}$ to be the event that $\KG^r(\mathcal{H})(\rho)$ has some proper $C$-coloring for some $1\leq C<\min\left\{{|V(\mathcal{H})|-{\rm alt}_r(\mathcal{H},\sigma,q)\over r-1},d\right\}.$
Clearly, to complete the proof, it suffices to show that $Pr(\mathbf{E})\rightarrow 0$ as $m\rightarrow +\infty$.  

Consider an arbitrary $P=(M_1,\ldots,M_r)$ 
such that $M_1,\ldots, M_r$ 
are pairwise disjoint subsets of $[n]$ and 
$|E(\sigma(M_i))|\geq q$ for each $i\in [r]$. Now, for each $i\in[r]$, 
see all edges in $E(\mathcal{H}[\sigma(M_i)])$ as a chain 
according to the total ordering $\preceq$ and 
consider the last $q$ edges appearing in this chain. 
Let $U_i=U_i(P)$ be set of those edges. \\

\noindent{\bf The Event $\mathbf{A}$.} 
Define $\mathbf{A}(P)$ to be the event that for each $i\in [r]$, there is
a $t$-subset $V_i\subseteq U_i$ such that 
the subhypergraph $\KG^r(\mathcal{H})(\rho)[V_1,\ldots,V_r]$
has no edge. 
Now, define the event $\mathbf{A}$ to be the union of all $\mathbf{A}(P)$'s, i.e., 
 $$\mathbf{A}=\bigcup \mathbf{A}(P),$$
where the union is taken over all $P=(M_1,\ldots,M_r)$  
such that $M_1,\ldots,M_r$ 
are pairwise disjoint subsets of $[n]$ and
$|E(\sigma(M_i))|\geq q$ 
for each $i\in [r]$.\\

Let $c:E(\mathcal{H})\longrightarrow [C]$ be a proper coloring for $\KG^r(\mathcal{H})(\rho)$,
where $C<\min\left\{{|V(\mathcal{H})|-{\rm alt}_r(\mathcal{H},\sigma,q)\over r-1},d\right\}.$ 
Consider the $r$-tuple  $P=(N_1,\ldots,N_r)$ whose existence is 
ensured by Lemma~\ref{lem:main}. Without loss of generality, we may assume that 
$c(N_1,\ldots,N_r)=1$.
Now, for each $j\in[r]$, 
see all edges in $E(\mathcal{H}[\sigma(N_j)])$ as a chain 
according to the total ordering $\preceq$ and 
consider the last $q$ edges appearing in this chain. 
Let $U_j=\{f_1^j,\ldots,f_q^j\}$ be set of those edges.  
For each $j\in[r]$, let 
$V_{j}\subseteq U_j$ be the set of edges receiving color $1$.  
Clearly,  the subhypergraph 
$\KG^r(\mathcal{H})[V_1,\ldots,V_r]$ is a monochromatic subhypergraph of 
$\KG^r(\mathcal{H})$. Therefore, since $c$ is a proper coloring,  $\KG^r(\mathcal{H})(\rho)[V_1,\ldots,V_r]$  has no edge, which implies that $\mathbf{A}(P)\subseteq \mathbf{A}$ is happened. Hence, we have $\mathbf{E}\subseteq \mathbf{A}$.
 
Also, note that since $\mathcal{H}_m$'s are distinct, if $m\rightarrow +\infty$, then 
$n=n(m)\rightarrow +\infty$.
Consequently, if we prove that $Pr(\mathbf{A})\rightarrow 0$  as $n\rightarrow +\infty$, then we have
$Pr(\mathbf{E})\rightarrow 0$ as $m\rightarrow +\infty$, as desired. 
First, note that  
$$\begin{array}{lll}
Pr(A) &\leq & \ds\sum {q\choose t}^r(1-\rho)^{t^r}\\
%	 & \leq & (r+1)^n\ds{q\choose t}^r(1-p)^{t^r}\\ \\
	 & \leq & \left(r+1\right)^n\left({eq\over t}\right)^{rt}e^{-\rho t^r}\\ 
%	 & \leq & \left(r+1\right)^n\left({ed}\right)^{rt}e^{-pt^r}\\ \\
	 & \leq & e^{-\rho t^r+n\ln(r+1)+rt(1+\ln (d-1))},
\end{array}$$ 
where the summation is taken over all $P=(M_1,\ldots,M_r)$ such that $M_1,\ldots,M_r$ 
are pairwise disjoint subsets of $[n]$ and 
$|E(\mathcal{H}[\sigma(M_i)])|\geq q$ for each $i\in [r]$ (Note that the number 
of such $P$'s is at most $(r+1)^n$).
Thus, if 
$${n\ln(r+1)+rt(1+\ln (d-1))}-\rho t^r\rightarrow -\infty,$$ 
then $Pr(\mathbf{A})\rightarrow 0$. This completes the proof. \hfill$\square$\\

The next proof is almost the same as the prior proof. However, for the ease of reading, 
we state it here completely. \\  

\noindent{\bf Proof of Theorem~\ref{main:thmkneser}.}
In view of the discussion before the statement of Theorem~\ref{main:thmkneser},
it is enough to consider that Condition~(II) holds. 
For random Kneser hypergraph $\KG^r_{n,k}(\rho)$, similar to the proof of Lemma~\ref{thm:main},  we shall introduce two events $\mathbf{E}_n$ and $\mathbf{A}_n$.\\
\noindent{\bf The Event $\mathbf{E}_n$.}  Define $\mathbf{E}_n$ to be the 
event that $\KG^r_{n,k}(\rho)$ has some proper $C$-coloring for some $1\leq C<d.$ 
Clearly, to complete the proof, it suffices to show that 
$Pr(\mathbf{E}_n)\rightarrow 0$ as $n\rightarrow +\infty$.  
Consider an arbitrary $P=(M_1,\ldots,M_r)\in(\Z_r\cup\{0\})^n$ 
such that $M_1,\ldots,M_r$ are pairwise disjoint $(k+l)$-subsets of $[n]$.
Let $$U_i=U_i(P)=\{A\;:\; |A|=k,\; A\subseteq M_i\}
\subseteq V(\KG^r_{n,k}).$$ 

\noindent{\bf The Event $\mathbf{A}$.} 
Define $\mathbf{A}_n(P)$ to be the event that for each $i\in [2]$, there is
a $t$-subset $V_i\subseteq U_i$ such that 
the subhypergraph $\KG^r_{n,k}(\rho)[V_1,\ldots,V_r]$
has no edge. 
Now, define the event $\mathbf{A}_n$ to be the union of all $\mathbf{A}_n(P)$'s, i.e., 
 $$\mathbf{A}_n=\bigcup \mathbf{A}_n(P),$$
where the union is taken over all $P=(M_1,\ldots,M_r)\in (\Z_r\cup\{0\})^n$ 
such that $M_1,\ldots,M_r$ are pairwise disjoint  $(k+l)$-subsets of $[n]$.   

Let $c:{[n]\choose k}\longrightarrow [C]$ be a proper coloring for $\KG^r_{n,k}(\rho)$,
where $C<d.$ 
Consider the $r$-tuple  $P=(N_1,\ldots,N_r)$ whose existence is 
ensured by Lemma~\ref{lem:mainken}. Without loss of generality, we may assume that 
$c(N_1,\ldots,N_r)=1$. Let $$U_i=\{A\;:\; |A|=k,\; A\subseteq N_i\}=\{f_1^j,\ldots,f_q^j\},$$
where $q={k+l\choose k}$. 
For each $j\in[r]$, let 
$V_{j}\subseteq U_j$ be the set of edges receiving color $1$.  
Clearly,  the subhypergraph 
$\KG^r_{n,k}[V_1,\ldots,V_r]$ is a monochromatic subhypergraph of 
$\KG^r_{n,k}$. Therefore, since $c$ is a proper coloring,  $\KG^r_{n,k}(\rho)[V_1,\ldots,V_r]$  has no edge, which implies that $\mathbf{A}_n(P)\subseteq \mathbf{A}_n$ is happened. Hence, we have $\mathbf{E}_n\subseteq \mathbf{A}_n$.
 
Consequently, if we prove that $Pr(\mathbf{A}_n)\rightarrow 0$  as $n\rightarrow +\infty$, then we have
$Pr(\mathbf{E}_n)\rightarrow 0$ as $n\rightarrow +\infty$, as desired. 
First, note that  
$$\begin{array}{lll}
Pr(A) &\leq & \ds\sum {q\choose t}^r(1-\rho)^{t^r}\\ \\
%	 & \leq & (r+1)^n\ds{q\choose t}^r(1-p)^{t^r}\\ \\
	 & \leq & {n\choose k+l}^r\left({eq\over t}\right)^{rt}e^{-\rho t^r}\\ \\
	 & \leq & ({ne\over k+l})^{r(k+l)}\left({eq\over t}\right)^{rt}e^{-\rho t^r}\\ \\
%	 & \leq & \left(r+1\right)^n\left({ed}\right)^{rt}e^{-pt^r}\\ \\
	 & \leq & e^{-\rho t^r+r(k+l)(\ln n+1) +rt(1+\ln (d-1))},
\end{array}$$ 
where the summation is taken over all $P=(M_1,\ldots,M_r)$ such that $M_1,\ldots,M_r$ are pairwise disjoint $(k+l)$-subsets of $[n]$. (Note that the number 
of such $P$'s is at most ${n\choose k+l}^r$).
Thus, if 
$$r(k+l)(\ln n+1) +rt(1+\ln (d-1))-\rho t^r\rightarrow -\infty,$$
then $Pr(\mathbf{A}_n)\rightarrow 0$. This completes the proof.\hfill$\square$
\\

\noindent{\bf Proof of Theorem~\ref{Kupavskiimainnew}.}
If we set $r=2$ and use Lemma~\ref{SGlemma} instead of Lemma~\ref{lem:mainken}, the proof follows by almost ``copy-pasting'' the proof of Theorem~\ref{main:thmkneser}.
\hfill$\square$\\

\noindent{\bf Proof of Theorem~\ref{genalihaji}.}
Let $c:V(\KG^r(\mathcal{H}))\longrightarrow [C]$ be a coloring such that 
$\KG^r(\mathcal{H})$ has no monochromatic $K_{t,\ldots,t}^r$ subhypergraph, where  
$1\leq C<\min\left\{{n-{\rm alt}_r(\mathcal{H},\sigma,q)\over r-1},d\right\}.$
Let $e_{i,j}$'s be the edges whose existence is ensured by Lemma~\ref{lem:main}.
If we set $W_j=\{e_{1,j},\ldots,e_{t,j}\}$, then the subhypergraph 
$\KG^r(\mathcal{H})[W_1,\ldots,W_r]$ is a monochromatic $K^r_{t,\ldots,t}$ 
subhypergraph of $\KG^r(\mathcal{H})$, a contradiction.\hfill$\square$\\

\section{Another Extension of Theorem~\ref{Kupavskiimain}\label{SG}}
Note that Kupavskii's result (Theorem~\ref{Kupavskiimain}) concerns the 
chromatic number of Schrijver graphs, while if we use Theorem~\ref{thm:main} 
(set $r=2$ and $\HH_m=([n],{[n]\choose k}_{stable})$)  to obtain a lower bound 
for the chromatic number of random Schrijver graphs, then it implies 
a lower bound which is~not as well as the lower bound stated in 
Theorem~\ref{Kupavskiimain}. Actually, one can see that this lower bound  
is the lower bound stated in Theorem~\ref{Kupavskiimain} minus one. 
Motivated by the this discussion, 
in this section, we present another extension of Theorem~\ref{thm:main}, 
which immediately  implies Theorem~\ref{Kupavskiimain}.  
Actually, with some slightly modifications in the proof of Lemma~\ref{lem:main}, 
we can have a similar statement which is helpful for the case of Schrijver graphs. 

Set $\Z_2=\{+,-\}$. For each $X=(x_1,\ldots,x_n)\in\{+,-,0\}^n$, define
$$X^+=\{i\in[n]:\; x_i=+\}\quad\mbox{and}\quad X^-=\{i\in[n]:\; x_i=-\}.$$
Let  $\mathcal{H}$ be a hypergraph and $\sigma: [n]\longrightarrow V(\mathcal{H})$ be a bijection.
Define 
$$\salt(\mathcal{H},\sigma,q)=\max\left\{{\rm alt}(X):\; X\in\{+,-,0\}^n\;\mbox{ s.t. }  \ds\min_{\varepsilon\in\{+,-\}}(|E(\mathcal{H}[\sigma(X^\varepsilon)])|)\leq q-1\right\}.$$
Now, set 
$$\salt(\mathcal{H},q)=\min_\sigma \salt(\mathcal{H},\sigma,q),$$
where the minimum is taken over all bijection $\sigma:[n]\longrightarrow V(\mathcal{H}).$
For $q=1$, we prefer to use $\salt(\mathcal{H})$ instead of $\salt(\mathcal{H},1)$.
The present authors~\cite{2013arXiv1302.5394A} proved that 
$n-\salt(\mathcal{H})+1$ is a lower bound for the chromatic number of $\KG(\mathcal{H})$. 
One can simply see that $\salt\left([n],{[n]\choose k}_{stable}\right)=2k-1$. 
Hence, in view of last mentioned lower bound, we have an exact 
lower bound for the chromatic number of Schrijver graphs $\SG_{n,k}$. Also, 
as it is expected, we can have the following lemma which is similar to Lemma~\ref{lem:main}. 
\begin{lemma}\label{main:lemmaSG}
Let $d,q$ and $t$ be a positive integers, where $d\geq 2$ and  $q\geq (d-1)(t-1)+1$.
Let $\mathcal{H}$ be a hypergraph and $\preceq$ be a total ordering on the power set 
of $V(\mathcal{H})$, which refines the partial ordering according to size.
Moreover, let $\sigma: [n]\longrightarrow V(\mathcal{H})$ be a bijection.
Then for any coloring  $c:E(\mathcal{H})\longrightarrow [C]$, where 
$C<\min\{n-{\rm salt}(\mathcal{H},\sigma,q)+1,d\}$, there 
is an ordered pair $(N_1,N_2)$ with the following properties.
\begin{itemize}
\item $N_1,N_2$ are pairwise disjoint subsets of $[n]$.
\item For $j=1,2$, $|E(\mathcal{H}[\sigma(N_j)])|\geq q$.
\item For $j=1,2$, there are $t$ distinct edges $e_{1,j},\ldots,e_{t,j}\subseteq \sigma(N_j)$ chosen from the last $q$ largest 
edges in  $\sigma(N_j)$ {\rm (}according to the total ordering $\preceq${\rm )} such 
that all edges in $\{e_{i,j}:i\in[t]\;\&\; j\in[2]\}$ receive the same color $c(N_1,N_2)$.
\end{itemize}
\end{lemma}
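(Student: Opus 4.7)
The plan is to follow the proof of Lemma~\ref{lem:main} in the case $r=2$ almost verbatim, but replace the $\Z_2$-Tucker labeling by one tailored to the ``$\salt$'' notion so that we gain one extra unit in the resulting Tucker inequality. Without loss of generality assume $V(\mathcal{H})=[n]$ and $\sigma=I$. Fix a coloring $c:E(\mathcal{H})\to[C]$ with $C<\min\{n-\salt(\mathcal{H},I,q)+1,d\}$, set $\alpha=\salt(\mathcal{H},I,q)$ and $m=\alpha+C$, and suppose for contradiction that no pair $(N_1,N_2)$ with the stated properties exists.

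Next I would define a $\Z_2$-equivariant map $\lambda:(\Z_2\cup\{0\})^n\setminus\{\zero\}\longrightarrow \Z_2\times[m]$ parallel to the labeling used for Lemma~\ref{lem:main}, except on the high-alternation regime. If $\alt(X)\leq\alpha$, set $\lambda_1(X)$ to be the first nonzero coordinate of $X$ and $\lambda_2(X)=\alt(X)$. If $\alt(X)\geq\alpha+1$, then by the very definition of $\salt$ \emph{both} $|E(\mathcal{H}[X^+])|\geq q$ and $|E(\mathcal{H}[X^-])|\geq q$; I would pick the sign $\varepsilon\in\{+,-\}$ for which $X^\varepsilon$ is $\preceq$-maximal in $\{X^+,X^-\}$, inspect the last $q$ edges of $E(\mathcal{H}[X^\varepsilon])$ under $\preceq$, let $c(X)\in[C]$ be their most popular color (maximum if tied), and set $\lambda(X)=(\varepsilon,\alpha+c(X))$. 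Exactly as in Lemma~\ref{lem:main}, $\lambda$ is $\Z_2$-equivariant and any chain $X_1\subseteq X_2$ with $\lambda_2(X_1)=\lambda_2(X_2)\leq\alpha$ forces $\lambda_1(X_1)=\lambda_1(X_2)$.

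The argument is then completed by applying the refined octahedral $\Z_2$-Tucker variant of Meunier~\cite{MR2793613} (which, unlike Lemma~\ref{zptucker}, yields $\alpha+(m-\alpha)\geq n+1$ rather than merely $\geq n$): since $m<n-\alpha+1+\alpha$, this sharper lemma produces a chain $X_1\subseteq X_2$ with $\lambda_2(X_1)=\lambda_2(X_2)\geq \alpha+1$ and $\lambda_1(X_1)\neq\lambda_1(X_2)$. Writing the opposite signs as $\varepsilon_1$ and $\varepsilon_2=-\varepsilon_1$ and setting $N_j=X_j^{\varepsilon_j}$, the inclusion $X_1^{\varepsilon_1}\subseteq X_2^{\varepsilon_1}$ together with $X_2^{\varepsilon_1}\cap X_2^{\varepsilon_2}=\varnothing$ gives $N_1\cap N_2=\varnothing$; the common color $i=c(X_1)=c(X_2)$ and the pigeonhole estimate $\lceil q/C\rceil\geq t$ then furnish the required monochromatic $t$-tuples of edges in each $E(\mathcal{H}[\sigma(N_j)])$, contradicting our assumption.

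The main obstacle, in my view, is justifying the ``sharpened'' Tucker inequality that underlies the above argument: the plain $\Z_p$-Tucker lemma for $p=2$ only yields $m\geq n-\alpha$, which would lose the critical ``$+1$'' in the hypothesis $C<n-\salt(\mathcal{H},I,q)+1$. This unit of improvement is exactly what the asymmetric ``$\min$'' in the definition of $\salt$ is designed to extract, because in the high regime it forces \emph{both} $X^+$ and $X^-$ to carry many edges, which legitimizes the sign-choice rule above and thereby lets us invoke the octahedral Tucker variant rather than its $\Z_2$-symmetric ancestor. Once that combinatorial lemma is in hand, the rest of the proof is a routine adaptation of the $r=2$ case of Lemma~\ref{lem:main}.
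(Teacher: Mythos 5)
Your setup (reduce to a $\Z_2$-Tucker-type labeling, handle the low-alternation regime by first nonzero sign and alternation, the high regime by a popular color among the last $q$ edges) matches the paper's, but the argument collapses at exactly the point you flag as the ``main obstacle'': there is no refined octahedral $\Z_2$-Tucker variant that, under the same hypotheses as Lemma~\ref{zptucker} with $p=2$, yields $\alpha+(m-\alpha)\geq n+1$ rather than $\geq n$. The bound $m\geq n$ in the octahedral Tucker lemma is tight, and your sign-choice rule (taking the $\preceq$-maximal side) does not add any hypothesis that could buy a stronger conclusion; the observation that both $X^+$ and $X^-$ carry at least $q$ edges only tells you the labeling is well defined on either side, not that the Tucker inequality improves. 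Concretely, with $m=\alpha+C$ the genuine conclusion $m\geq n$ reads $C\geq n-\alpha$, which is perfectly compatible with the hypothesis $C<n-\salt(\mathcal{H},I,q)+1$, i.e.\ $C\leq n-\alpha$ (take $C=n-\alpha$); so no contradiction arises and property~(iii) need not fail. Your proof therefore loses precisely the ``$+1$'' that distinguishes this lemma from the plain $r=2$ case of Lemma~\ref{lem:main}.

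The paper gains that unit by a different device, entirely within the standard Lemma~\ref{zptucker}. Since for $\alt(X)\geq\alpha+1$ \emph{both} $|E(\mathcal{H}[X^+])|\geq q$ and $|E(\mathcal{H}[X^-])|\geq q$, one defines a popular color $g(X^\varepsilon)$ on each side separately. If some high-alternation $X$ has $g(X^+)=g(X^-)$, then $(N_1,N_2)=(X^+,X^-)$ already satisfies the conclusion and the proof ends there. Otherwise $g(X^+)\neq g(X^-)$ for every such $X$, so $g(X):=\max\left(g(X^+),g(X^-)\right)\geq 2$; setting $\lambda(X)=(\varepsilon,\alpha+g(X)-1)$, with $\varepsilon$ the sign achieving the maximum, the second coordinate lies in $\{\alpha+1,\ldots,\alpha+C-1\}$ and one may take $m=\alpha+C-1$. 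Now the ordinary conclusion of Lemma~\ref{zptucker} reads $C-1\geq n-\alpha$, contradicting $C\leq n-\alpha$, so there is a chain $X_1\subseteq X_2$ with equal second coordinates and opposite signs; unwinding the definition gives $g(X_1^+)=g(X_2^-)$ (after possibly swapping signs), and $N_1=X_1^+$, $N_2=X_2^-$ are disjoint because $X_1^+\subseteq X_2^+$. If you replace the appeal to the nonexistent sharpened Tucker lemma by this case analysis, the remainder of your argument (disjointness, and the pigeonhole count $\lceil q/C\rceil\geq t$) goes through as you wrote it.
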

\noindent{\bf Sketch of Proof.} 
Consider an arbitrary coloring $c:E(\mathcal{H})\longrightarrow [C]$ such that $1\leq C<\min\{n-\salt(\mathcal{H},\sigma,q)+1,d\}$. 
Without loss of generality and for simplicity of notation,  we may assume that $V(\mathcal{H})=[n]$ and  $\sigma=I$ is the identity map.
In view of the definition of $\salt(\mathcal{H},I,q)$, 
for any $X\in\{+,-,0\}^n\setminus\{\zero\}$ with $\alt(X)\geq \salt(\mathcal{H},I,q)+1$, 
we have $|E(\mathcal{H}[X^\varepsilon])|\geq q$ for each $\varepsilon\in\{+,-\}$. See all edges in $E(\mathcal{H}[X^\varepsilon])$ as a chain (according to the total ordering $\preceq$) and 
consider the last $q$ edges of this chain and 
define $g(X^\varepsilon)$  to be the maximum  
most popular color amongst colors assigned  to these $q$ edges.
Now, set $g(X)=\max\left(g(X^+),g(X^-)\right)$. Note that if there is an 
$X\in\{+,-,0\}^n\setminus\{\zero\}$ with $\alt(X)\geq \salt(\mathcal{H},\sigma,q)+1$ 
and such that $g(X^+)=g(X^-)=i$, then 
for $N_1=X^+$ and $N_2=X^-$, 
one can simply see that the pair $(N_1,N_2)$ 
with $c(N_1,N_2)=i$ has the desired properties.
Therefore, we may assume that  $g(X^+)\neq g(X^-)$ 
for each $X\in\{+,-,0\}^n\setminus\{\zero\}$ with $\alt(X)\geq \salt(\mathcal{H},I,q)+1$. 
Note that it implies that $g(X)\geq 2$ for each $X\in\{+,-,0\}^n\setminus\{\zero\}$ with $\alt(X)\geq \salt(\mathcal{H},I,q)+1$. 
Set $p=2$, 
$\alpha=\salt(\mathcal{H},I,q)$ and $m=\salt(\mathcal{H},I,q)+C-1$. 
Now, we are ready to define a map
$\lambda: \{+,-,0\}^n\setminus\{\zero\}\longrightarrow \{+,-\}\times [m]$.
Consider an arbitrary $X\in\{+,-,0\}^n\setminus\{\zero\}$. 
If $\alt(X)\leq \alpha$, then define $\lambda(X)=(\varepsilon, \alt(X))$,
where $\varepsilon$ is the first nonzero coordinate of $X$.  
If $\alt(X)\geq \alpha+1$, then set $\lambda(X)=(\varepsilon,\alpha+g(X)-1)$, 
where $\varepsilon$ is $+$ if $g(X^+)>g(X^-)$ and is $-$ otherwise. By the same approach as in
the proof of Lemma~\ref{lem:main}, the proof follows with no difficulty.  \hfill$\square$

With the same approach as we used to derive Theorem~\ref{thm:main} from Lemma~\ref{lem:main}, we can prove the following theorem from Lemma~\ref{main:lemmaSG}.
\begin{theorem}\label{sgtheorem}
	Let ${\mathcal A}=\left\{\mathcal{H}_m:\; m\in\mathbb{N}\right\}$ be a 
	family of distinct hypergraphs and set
	$n=n(m)=|V(\mathcal{H}_m)|$.  Also, let $t=t(n)$, $d=d(n)$, 
	and $q=q(n)$ be integer functions, where
	$d\geq 2$ and $(d-1)(t-1)+1\leq q\leq (d-1)t$, and 
	 let $\rho=\rho(n)\in(0,1]$ be a real function. 
	Then we a.s. have $$\chi(\KG(\mathcal{H}_m)(\rho))\geq \min\left\{|V(\mathcal{H}_m)|-
	\salt(\mathcal{H}_m,q)+1,d\right\}$$
	provided that ${n\ln3+2t(1+\ln(d-1))}-\rho t^2\rightarrow -\infty$ as $n$ tends to infinity.
\end{theorem}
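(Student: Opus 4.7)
The plan is to run the proof of Theorem~\ref{thm:main} essentially verbatim, specialized to $r=2$ but with Lemma~\ref{main:lemmaSG} replacing Lemma~\ref{lem:main}, so that the signed alternation number $\salt$ takes the place of $\alt_2$. First, for each $m$ I fix a bijection $\sigma=\sigma_m:[n]\to V(\mathcal{H}_m)$ achieving $\salt(\mathcal{H}_m,\sigma,q)=\salt(\mathcal{H}_m,q)$, together with a total ordering $\preceq$ on $2^{V(\mathcal{H}_m)}$ refining size. Since the $\mathcal{H}_m$ are distinct, $n\to\infty$ with $m$, so it suffices to show that the probability of the bad event tends to $0$ as $n\to\infty$.

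Let $\mathbf{E}$ be the event that $\KG(\mathcal{H}_m)(\rho)$ admits a proper $C$-coloring for some $1\leq C<\min\{|V(\mathcal{H}_m)|-\salt(\mathcal{H}_m,\sigma,q)+1,d\}$. For every ordered pair $P=(M_1,M_2)$ of disjoint subsets of $[n]$ with $|E(\mathcal{H}_m[\sigma(M_i)])|\geq q$ ($i=1,2$), let $U_i(P)$ denote the $q$ maximal edges of $\mathcal{H}_m[\sigma(M_i)]$ under $\preceq$, and define $\mathbf{A}(P)$ to be the event that for $i=1,2$ there is a $t$-subset $V_i\subseteq U_i(P)$ with $\KG(\mathcal{H}_m)(\rho)[V_1,V_2]$ edgeless. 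Set $\mathbf{A}=\bigcup_P\mathbf{A}(P)$. If $c$ is a proper $C$-coloring of $\KG(\mathcal{H}_m)(\rho)$ witnessing $\mathbf{E}$, then Lemma~\ref{main:lemmaSG} produces a pair $(N_1,N_2)$ and $t$ edges in each $U_j(N_1,N_2)$ sharing a common $c$-color; these $2t$ edges form a monochromatic $K_{t,t}$ in $\KG(\mathcal{H}_m)$ under $c$, so none of the $t^2$ crossing edges can appear in $\KG(\mathcal{H}_m)(\rho)$, meaning $\mathbf{A}(N_1,N_2)$ occurs. Hence $\mathbf{E}\subseteq\mathbf{A}$.

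It remains to bound $Pr(\mathbf{A})$ by a union bound. The number of admissible $P$'s is at most $3^n$, since each coordinate of $[n]$ lies in $M_1$, in $M_2$, or in neither, and independence of the edges of $\KG(\mathcal{H}_m)(\rho)$ gives
\[
Pr(\mathbf{A}(P))\leq \binom{q}{t}^{2}(1-\rho)^{t^{2}}\leq \left(\frac{eq}{t}\right)^{2t}e^{-\rho t^{2}}\leq (e(d-1))^{2t}\,e^{-\rho t^{2}},
\]
the last inequality using $q\leq (d-1)t$. Therefore
\[
Pr(\mathbf{A})\leq \exp\!\bigl(n\ln 3+2t(1+\ln(d-1))-\rho t^{2}\bigr)\longrightarrow 0
\]
by hypothesis. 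I expect no real obstacle: the only genuine improvement over the $r=2$ specialization of Theorem~\ref{thm:main} is the $+1$ gained by passing from $\alt_2$ to $\salt$, and that improvement is supplied entirely by Lemma~\ref{main:lemmaSG}; the rest is direct transcription of the union-bound calculation in the proof of Theorem~\ref{thm:main}.
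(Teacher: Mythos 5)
Your proposal is correct and follows exactly the route the paper intends: the paper itself gives no separate argument for Theorem~\ref{sgtheorem}, stating only that one repeats the proof of Theorem~\ref{thm:main} with $r=2$ and with Lemma~\ref{main:lemmaSG} in place of Lemma~\ref{lem:main}, which is precisely what you carry out (including the correct $3^n$ union bound and the use of $q\leq(d-1)t$).
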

Since $\KG\left([n],{[n]\choose k}_{stable}\right)=\SG_{n,k}$  
and $\salt\KG\left([n],{[n]\choose k}_{stable}\right)=2k-1$, Theorem~\ref{sgtheorem} implies  
Theorem~\ref{Kupavskiimain}. Hence, it is a generalization of 
Theorem~\ref{Kupavskiimain} with a combinatorial proof. 
Also, similar to the proof of Theorem~\ref{main:thmkneser}, 
we have a purely combinatorial proof for the Kupavskii's~heorem (Theorem~\ref{Kupavskiimain}). \\

It might be intriguing that we state Theorem~\ref{sgtheorem} just in the case of graphs while it seems that these results remain true even for hypergraphs. 
Actually, For a hypergraph $\mathcal{H}$, we can naturally generalize 
$\salt(\mathcal{H})$ to  $\salt_r(\mathcal{H})$.  However, for any hypergraph $\mathcal{H}$, the value of $\salt_r(\mathcal{H})$ is equal to $|V(\mathcal{H})|$, which clearly makes this generalization useless.  

In the proof of Theorem~\ref{genalihaji}, if we set $r=2$ and use Lemma~\ref{main:lemmaSG} instead of Lemma~\ref{lem:main}, then we have the following theorem. It should be mentioned that this result is already proved in~\cite{2013arXiv1302.5394A} for $t=1$.
\begin{theorem}
Let $\mathcal{H}$ be a hypergraph and $\sigma:[n]\longrightarrow V(\mathcal{H})$ be an arbitrary bijection. Also, 
let $d,q$ and $t$ be be positive integers, where  $q\geq (d-1)(t-1)+1$. Then 
any coloring of $\KG(\mathcal{H})$ with no monochromatic $K_{t,t}$ uses at least 
$\min\left\{n-{\rm salt}(\mathcal{H},\sigma,q)+1,d\right\}$ colors.
\end{theorem}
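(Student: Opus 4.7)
The plan is to follow the exact template of the proof of Theorem~\ref{genalihaji}, replacing Lemma~\ref{lem:main} with its two-color analogue Lemma~\ref{main:lemmaSG}, as the paragraph preceding the statement already hints. So I will argue by contradiction: suppose there is a coloring $c:E(\mathcal{H})\longrightarrow [C]$ of $\KG(\mathcal{H})$ with no monochromatic $K_{t,t}$ subgraph, where
\[
1\leq C<\min\{n-\salt(\mathcal{H},\sigma,q)+1,d\}.
\]
The hypothesis $q\geq (d-1)(t-1)+1$ is exactly the assumption required by Lemma~\ref{main:lemmaSG}, so that lemma applies verbatim to $\mathcal{H}$, $\sigma$, and $c$.

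The key step is to invoke Lemma~\ref{main:lemmaSG} to produce an ordered pair $(N_1,N_2)$ of disjoint subsets of $[n]$ with $|E(\mathcal{H}[\sigma(N_j)])|\geq q$ for $j=1,2$, together with $t$ distinct edges $e_{1,j},\ldots,e_{t,j}\subseteq\sigma(N_j)$ lying among the last $q$ elements of $E(\mathcal{H}[\sigma(N_j)])$ in the total ordering $\preceq$, all receiving the same color $c(N_1,N_2)\in[C]$. The ``last $q$'' clause will play no role here; all we need is that every $e_{i,1}$ is disjoint from every $e_{i',2}$, which follows from $N_1\cap N_2=\varnothing$, and that the $e_{i,j}$'s are pairwise distinct within each side.

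Putting $W_j=\{e_{1,j},\ldots,e_{t,j}\}$ for $j=1,2$, the subhypergraph $\KG(\mathcal{H})[W_1,W_2]$ is a complete bipartite graph $K_{t,t}$ inside $\KG(\mathcal{H})$, since every pair $(e_{i,1},e_{i',2})$ is disjoint and therefore forms an edge of $\KG(\mathcal{H})$. Moreover all $2t$ vertices of this $K_{t,t}$ receive color $c(N_1,N_2)$, so it is monochromatic under $c$, contradicting our assumption. Hence no such coloring can exist, proving the lower bound.

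There really is no serious obstacle: Lemma~\ref{main:lemmaSG} is designed precisely to give this kind of two-sided monochromatic configuration, and the entire argument is a one-line application. The only thing worth double-checking is that the $e_{i,j}$'s lie in $V(\KG(\mathcal{H}))=E(\mathcal{H})$ (they do, by construction in Lemma~\ref{main:lemmaSG}), so the resulting copy of $K_{t,t}$ is genuinely a subhypergraph of $\KG(\mathcal{H})$ rather than merely of the bipartite incidence structure.
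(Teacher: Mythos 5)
Your proposal is correct and follows exactly the route the paper takes: the paper derives this theorem by running the proof of Theorem~\ref{genalihaji} with $r=2$ and Lemma~\ref{main:lemmaSG} in place of Lemma~\ref{lem:main}, obtaining the pair $(N_1,N_2)$ and setting $W_j=\{e_{1,j},\ldots,e_{t,j}\}$ to exhibit a monochromatic $K_{t,t}$, a contradiction. Nothing is missing.
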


\subsection*{Acknowledgments} 
The authors would like to thank Dr. Andrey~Kupavskii for finding a computational problem in the previous version of the paper. 
The research of Hossein Hajiabolhassan was in part supported by a grant from IPM (No. 94050128).  
%%%%%%%%%%%%%%%%%%%%%%%%%%%%%%%%%%%%%%%%%
\def\cprime{$'$} \def\cprime{$'$}

%%%%%%%%%%%%%%%%%%%%%%%%%%%%%%%%%%%%%%%%%
%\bibliographystyle{plain}
%\bibliography{MyReferences}
\end{document}